\tikzset{Rightarrow/.style={double equal sign distance,>={Implies},->},
triple/.style={-,preaction={draw,Rightarrow}},
quadruple/.style={preaction={draw,Rightarrow,shorten >=0pt},shorten >=1pt,-,double,double
distance=0.2pt}}
\tikzset{monohook/.style={right hook-stealth}} 
\tikzset{mono/.style={>-stealth}} 
\tikzset{epi/.style={-twotriang}} 
\tikzset{twoarrowlonger/.style={double,double distance=1.5pt,
shorten <=5pt,shorten >=6pt,
decoration={markings,mark=at position -4pt with {\arrow[scale=1.75]{>}}},
preaction={decorate}}} 
\tikzstyle{category1} = [rectangle, rounded corners, minimum width=5cm, minimum height=0.8cm,text centered, draw=black, 
\tikzstyle{category3} = [rectangle, rounded corners, minimum width=3.5cm, minimum height=0.8cm,text centered, draw=black,  
\tikzstyle{category4} = [rectangle, rounded corners, minimum width=3.5cm, minimum height=0.8cm,text centered, draw=black, 
\tikzstyle{category5} = [rectangle, rounded corners, minimum width=5cm, minimum height=0.8cm,text centered, draw=black, 
\tikzstyle{category7} = [rectangle, rounded corners, minimum width=6cm, minimum height=0.8cm,text centered, draw=black, 
\tikzstyle{higharrow}=[thick,red]
\theoremstyle{plain}   
\newtheorem{thm}{Theorem}[section] 
\let\c@thm\c@thm\makeatother
\let\c@cor\c@thm\makeatother
\let\c@lem\c@thm\makeatother
\newtheorem{prop}{Proposition}[section]
\let\c@prop\c@thm\makeatother
\let\c@claim\c@thm\makeatother
\newtheorem{expectation}{Expectation}[section]
\let\c@expectation\c@thm\makeatother
\newtheorem{question}{Question}[section]
\let\c@question\c@thm\makeatother
\theoremstyle{definition}
\let\c@defn\c@thm\makeatother
\newtheorem{const}{Construction}[section]
\let\c@const\c@thm\makeatother
\let\c@notn\c@thm\makeatother
\theoremstyle{remark}
\newtheorem{rmk}{Remark}[section]
\let\c@rmk\c@thm\makeatother
\let\c@ex\c@thm\makeatother
\let\c@observationn\c@thm\makeatother
\let\c@digression\c@thm\makeatother
\let\c@equation\c@thm
\numberwithin{equation}{section}
\newcommand{\newrefformat}[2]{}
\crefname{lem}{Lemma}{Lemmas}
\crefname{thm}{Theorem}{Theorems}
\crefname{defn}{Definition}{Definitions}
\crefname{notn}{Notation}{Notations}
\crefname{const}{Construction}{Constructions}
\crefname{prop}{Proposition}{Propositions}
\crefname{rmk}{Remark}{Remarks}
\crefname{cor}{Corollary}{Corollaries}
\crefname{equation}{Display}{Displays}
\crefname{ex}{Example}{Examples}
\crefname{expectation}{Expectation}{Expectations}
\newcommand{\cC}{\mathcal{C}}
\newcommand{\cE}{\mathcal{E}}
\newcommand{\cG}{\mathcal{G}}
\newcommand{\cP}{\mathcal{P}}
\newcommand{\cS}{\mathcal{S}}
\newcommand{\cW}{\mathcal{W}}
\newcommand{\cat}{\cC\!\mathit{at}}
\newcommand{\gpd}{\cG\!\mathit{pd}}
\DeclareMathOperator{\Ob}{Ob}
\DeclareMathOperator{\Hom}{Hom}
\DeclareMathOperator{\Map}{Map}
\DeclareMathOperator{\Ar}{Ar}
\DeclareMathOperator{\Seq}{Seq}
\DeclareFontFamily{OT1}{pzc}{}
\DeclareFontShape{OT1}{pzc}{m}{it}{<-> s * [1.10] pzcmi7t}{}
\DeclareMathAlphabet{\mathpzc}{OT1}{pzc}{m}{it}
\DeclareMathOperator{\op}{op}
\tikzset{arrow/.style={-stealth}} 
\tikzset{arrowshorter/.style={-stealth, shorten <=2pt, shorten >=2pt}}
\title{Waldhausen's $S_\bullet$-construction}
\author{Viktoriya Ozornova}
\address{Max Planck Institute for Mathematics, Bonn, Germany}
\email{viktoriya.ozornova@mpim-bonn.mpg.de} 
\author{Martina Rovelli}
\address{Department of Mathematics and Statistics, University of Massachusetts Amherst, Amherst, USA}
\email{mrovelli@umass.edu} 
\begin{document}

\maketitle

\begin{abstract}
This note is an expository contribution for a proceedings volume of the workshop \emph{Higher Segal Spaces and their Applications to Algebraic $K$-theory, Hall Algebras, and Combinatorics}.
We survey various versions of Waldhausen's $S_\bullet$-construction and the role they play in defining $K$-theory, and we discuss their $2$-Segality properties.
\end{abstract}

\section*{Introduction}

A prominent invariant for a ring (spectrum) $R$ is its $K$-theory space -- which is in fact a spectrum -- $K(R)$.
Various constructions are available to access $K(R)$, most notably:
Quillen's \emph{$+$-construction} \cite{QuillenCohomologyK}, Quillen's \emph{$Q$-construction}  \cite{QuillenK}, Segal's \emph{group completion} \cite{SegalCohomology}, Grayson's \emph{$S^{-1}S$-construction} \cite{Grayson}, and Waldhausen's \emph{$S_\bullet$-construction} \cite{waldhausen}.
The latter allows for some of the most general kinds of input, and it is the focus of this note.

From any ring $R$, one can consider the exact category $\cE_R$ of finitely generated projective $R$-modules. Further, given $\cE$ any exact ($\infty$-)category -- or even something more general such as a Waldhausen ($\infty$-)category -- its $K$-theory space $K(\cE)$ can be obtained through the realization of a certain simplicial space: Waldhausen's \emph{$S_\bullet$-construction} $S_\bullet(\cE)$. 

A lot of the \emph{structure} of $\cE$ -- such as information about the bicartesian squares in $\cE$ -- is naturally retained in the simplicial space $S_\bullet(\cE)$. Similarly, a lot of the relevant \emph{properties} of $\cE$ -- such as the fact that bicartesian squares are determined by the span of which they are a colimit cone -- are manifested in an interesting way in $S_\bullet(\cE)$: they are encoded in the fact that $S_\bullet(\cE)$ is a (lower) \emph{$2$-Segal space}.
Roughly, $S_\bullet(\cE)$ is a $2$-Segal space if $\cE$ is \emph{pointed} and \emph{stable} in an appropriate sense, and it is a lower $2$-Segal space if $\cE$ is pointed and satisfies just one half of the stability requirements, which we may refer to as \emph{semi-stable}.

The notion of a $2$-Segal space was discovered independently in \cite{DKbook} (cf.~the contribution \cite{SternChapter}) and in \cite{GCKT1} under the name of \emph{decomposition space} (cf.~the contribution \cite{HackneyChapter}), and the $S_\bullet$-construction $S_\bullet\cE$ was identified as the main example by both sets of authors. It was later shown in \cite{BOORS1,BOORS3} (cf.~the contribution \cite{RovelliChapter}) that in fact the $S_\bullet$-construction defines an equivalence between $2$-Segal spaces and certain double Segal spaces. Hence, every $2$-Segal space does indeed arise as an appropriate $S_\bullet$-construction.

In most cases of interest for $\cE$, the $K$-theory space $K(\cE)$ can be delooped infinitely many times, so it is in fact an $\Omega$-spectrum. This can be done by realizing an appropriate iterated version of the $S_\bullet$-construction, which produces a multisimplicial space $S^{(n)}_{\bullet,\dots,\bullet}(\cE)$.

Again, a lot of the structure of $\cE$ is manifested as a $2$-Segal property in each simplicial variable individually, so that it should be a multisimplicial space that is (lower) $2$-Segal in each simplicial variable. Enhancing the previous picture, $S^{(n)}_{\bullet,\dots,\bullet}(\cE)$ should be an $n$-fold $2$-Segal space if $\cE$ satisfies properties of pointedness and stability, and it should be an $n$-fold lower $2$-Segal space if $\cE$ satisfies only one half of the stability requirements.

\begin{center}
    \begin{tabular}{c|cc}
      $\cE$& $S_\bullet(\cE)$  &  $S^{(n)}_{\bullet,\dots,\bullet}(\cE)$\\
      \hline
 stable pointed    &   $2$-Segal &$n$-fold $2$-Segal\\
semi-stable pointed     & lower $2$-Segal&$n$-fold lower $2$-Segal
    \end{tabular}
\end{center}

In this note we discuss the various flavors of $S_\bullet$-constructions, how they can be used to define $K$-theory, their $2$-Segality properties, and how this is remembering features of the original input.

\subsection*{Acknowledgements}

We are grateful to Julie Bergner, Joachim Kock, Mark Penney, and Maru Sarazola for organizing a wondeful workshop at the Banff International Research Station: \emph{Higher Segal Spaces and their Applications to Algebraic $K$-theory, Hall Algebras, and Combinatorics} (24w5266, originally 20w5173). We are also thankful to Claudia Scheimbauer for creating and letting us use the picture macros from our previous papers and to Lennart Meier for helpful discussions on the history of the subject.
MR acknowledges the support from the National Science Foundation under Grant No. DMS-2203915.

\section{Waldhausen's $S_\bullet$-construction and $K$-theory space}

We review Waldhausen's $S_\bullet$-construction and some of its generalizations.

\subsection{Waldhausen's $S_\bullet$-construction}

We discuss in detail what Wald\-hau\-sen's $S_\bullet$-construction looks like and how it is used to define $K$-theory in the main example: an exact category of modules over a ring.

\subsubsection{The sequence construction}

Let $R$ be a nice (unital associative commutative) ring and let $\cE_R$ be the category of modules over $R$
or a full subcategory thereof.

The easiest case to keep in mind throughout this section is the case of $R$ being any ring and $\cE_R$ being the category of all $R$-modules, which is a primary example of an \emph{abelian} category. However, this choice leads to a trivial $K$-theory (cf.~\cref{KtheoryRing}). Alternatively, one can consider the case of $R$ being a local regular ring and $\cE_R$ being the category of finitely generated $R$-modules, which is also an abelian category.
Overall, what is considered the most interesting case is that of $R$ being any ring and $\cE_R$ being the category of finitely generated projective $R$-modules, which fails to be an abelian category, but can be given the structure of an \emph{exact} category.

When drawing diagrams valued in $\cE_R$, we follow the convention that arrows drawn horizontally, resp.~vertically, as
\begin{center}
\begin{tikzpicture}[inner sep=0cm, baseline=-30pt]
\begin{scope}
  \draw[thick] (-0.5,0.5) rectangle (1.5,-0.5);
\draw (0,0) node (a00){$A_0$};
\draw (1,0) node (a01){$A_1$};
        \draw[mono] (a00)--(a01);
        \end{scope}
\draw (3, 0) node (text){resp.}; 
\begin{scope}[xshift=5cm, yshift=0.5cm]
  \draw[thick] (-0.5,0.5) rectangle (0.5,-1.5);
\draw (0,0) node (a00){$A_0$};
\draw (0,-1) node (a01){$A_1$};
        \draw[epi] (a00)--(a01);
        \end{scope}
\end{tikzpicture}
\end{center}
correspond to an injective (resp.~surjective) morphism in $\cE_R$, that displayed squares of the form
    \begin{center}
\begin{tikzpicture}[scale=0.9, inner sep=0pt, font=\footnotesize, baseline=-30pt]
\begin{scope}[xshift=0cm]
\draw[thick] (2.5,0.5) rectangle (4.5,-1.5);
\begin{scope}
\draw  (3, -1)   node(a12){{\footnotesize $A_{10}$}};
\draw  (3, 0)   node(a02){{\footnotesize $A_{00}$}};
\draw  (4, 0)   node (a03){{\footnotesize $A_{10}$}};
\draw (4, -1)   node (a13){{\footnotesize $A_{11}$}};
\draw[mono] (a12)--(a13);
\draw[mono] (a02)--(a03);
\draw[epi] (a02)--(a12);
\draw[epi] (a03)--(a13);
\end{scope}
\end{scope}
\end{tikzpicture}
\end{center}
are bicartesian squares, and that objects denoted $Z_i$ are $R$-trivial modules, i.e., modules with one element.

Some first relevant information about the ring $R$ which is naturally stored in $\cE_R$ is the understanding of sequences of subobjects.
For instance, if $\cE_{\mathbb Z}$ is the category of finitely generated projective abelian groups, there are objects which admit non-trivial chains of subobjects of arbitrary length. Instead, if $\cE_{\mathbb Z/2}$ is the category of finitely generated $\mathbb Z/2$-modules, the length of a chain of subobjects of a given object is bounded by the rank of this object.
So one could consider:

\begin{const}
\label{discSeq}
    For $n\geq0$, let $\Seq^{\mathrm{disc}}_k(\cE_R)$ denote the set
    of all sequences of subobjects in $\cE_R$ of length $k$. For instance, for $k=3$ the generic element is of the form
    \begin{center}
\begin{tikzpicture}
        \draw[thick] (0,0.5) rectangle (4,-0.5);
     \begin{scope}[xshift=0cm, yshift=0cm]
        \draw (1,0) node(a00){$A_1$};
        \draw (2,0)  node(a01){$A_2$};
        \draw (3, 0)  node(a02){$A_3$};
        \draw[mono] (a00)--(a01);
        \draw[mono] (a01)--(a02);
    \end{scope}
    \end{tikzpicture}    
    \end{center}
    For $0<i\leq k$, let the $i$-th \emph{face map} $d_i\colon \Seq^{\mathrm{disc}}_k\cE_R\to \Seq^{\mathrm{disc}}_{k-1}(\cE_R)$ be given by skipping the $i$-th term. For instance, if $k=3$ this gives:
        \begin{center}
\begin{tikzpicture}
        \draw[thick] (0.5,0.5) rectangle (3.5,-0.5);
     \begin{scope}[xshift=0cm, yshift=0cm]
        \draw (1,0) node(a00){$A_1$};
        \draw (2,0)  node(a01){$A_2$};
        \draw (3, 0)  node(a02){$A_3$};
        \draw[mono] (a00)--(a01);
        \draw[mono] (a01)--(a02);
    \end{scope}
     \draw (3.8, 0) node (eq){$\xmapsto{d_i}$}; 
     \draw (6.5, 0) node (br){$
     \left\{
     \phantom{\begin{array}{cc}
A_2 \subseteq A_3, & i=1,\\
A_1 \subseteq A_3, & i=2,\\
A_1 \subseteq A_2, & i=3,\\
A\\
A\\
A\\
A_2/A_1 \subseteq A_3/A_1, & i=0
\end{array}
}
\right.
$};
\begin{scope}[yshift=0.1cm]
    \draw[thick] (4.5,1.5) rectangle (6.5,0.5);
     \begin{scope}[xshift=3cm, yshift=1cm]
        \draw (2,0)  node(a01){$A_2$};
        \draw (3, 0)  node(a02){$A_3$};
        \draw[mono] (a01)--(a02);
    \end{scope}
\draw (7.3, 1) node (i1){$,\quad i=1$};   
\end{scope}
\begin{scope}[yshift=-1.0cm]
    \draw[thick] (4.5,1.5) rectangle (6.5,0.5);
     \begin{scope}[xshift=3cm, yshift=1cm]
        \draw (2,0)  node(a01){$A_1$};
        \draw (3, 0)  node(a02){$A_3$};
        \draw[mono] (a01)--(a02);
    \end{scope}
\draw (7.3, 1) node (i2){$,\quad i=2$};   
\end{scope}
\begin{scope}[yshift=-2.1cm]
    \draw[thick] (4.5,1.5) rectangle (6.5,0.5);
     \begin{scope}[xshift=3cm, yshift=1cm]
        \draw (2,0)  node(a01){$A_1$};
        \draw (3, 0)  node(a02){$A_2$};
        \draw[mono] (a01)--(a02);
    \end{scope}
\draw (7.3, 1) node (i3){$,\quad i=3$};   
\end{scope}
    \end{tikzpicture}    
    \end{center}
Let the $0$-th
\emph{face map} $d_0\colon \Seq^{\mathrm{disc}}_k(\cE_R)\to \Seq^{\mathrm{disc}}_{k-1}(\cE_R)$ be given by taking quotients modulo the first term. For instance, if $k=3$ this gives:
        \begin{center}
\begin{tikzpicture}
        \draw[thick] (0.5,0.5) rectangle (3.5,-0.5);
     \begin{scope}[xshift=0cm, yshift=0cm]
        \draw (1,0) node(a00){$A_1$};
        \draw (2,0)  node(a01){$A_2$};
        \draw (3, 0)  node(a02){$A_3$};
        \draw[mono] (a00)--(a01);
        \draw[mono] (a01)--(a02);
    \end{scope}
     \draw (3.8, 0) node (eq){$\xmapsto{d_0}$}; 
\begin{scope}[yshift=-1.0cm]
    \draw[thick] (4.2,1.5) rectangle (7.5,0.5);
     \begin{scope}[xshift=3cm, yshift=1cm]
        \draw (2,0)  node(a01){$A_2/A_1$};
        \draw (3.8, 0)  node(a02){$A_3/A_1$};
        \draw[mono] (a01)--(a02);
    \end{scope}
\end{scope}
    \end{tikzpicture}    
    \end{center}
 For $0\leq i\leq k$, let the $i$-th \emph{degeneracy map}
    $s_i\colon \Seq_k\cE_R\to \Seq_{k+1}(\cE_R)$ be given by doubling the $i$-th term for $i>0$ and adding a zero object $0$ for $i=0$.
    For instance, if $k=2$
    this gives
            \begin{center}
\begin{tikzpicture}
 \begin{scope}[xshift=1cm]
        \draw[thick] (0.5,0.5) rectangle (2.5,-0.5);
     \begin{scope}[xshift=0cm, yshift=0cm]
        \draw (1,0) node(a00){$A_1$};
        \draw (2,0)  node(a01){$A_2$};
        \draw[mono] (a00)--(a01);
    \end{scope}
\end{scope}
     \draw (3.8, 0) node (eq){$\xmapsto{s_i}$}; 
     \draw (6.5, 0) node (br){$
     \left\{
     \phantom{
     \begin{array}{cc}
A_2 \subseteq A_3, & i=1,\\
A_1 \subseteq A_3, & i=2,\\
A_1 \subseteq A_2, & i=3,\\
A\\
A\\
A\\
A_2/A_1 \subseteq A_3/A_1, & i=0
\end{array}
}
\right.
$};
\begin{scope}[yshift=0.1cm, xshift=1cm]
    \draw[thick] (3.5,1.5) rectangle (6.5,0.5);
     \begin{scope}[xshift=3cm, yshift=1cm]
     \draw (1,0) node (a00){$0$};
        \draw (2,0)  node(a01){$A_1$};
        \draw (3, 0)  node(a02){$A_2$};
        \draw[mono] (a00)--(a01);
        \draw[mono] (a01)--(a02);
    \end{scope}
\draw (7.3, 1) node (i1){$,\quad i=0$};   
\end{scope}
\begin{scope}[yshift=-1cm, xshift=1cm]
    \draw[thick] (3.5,1.5) rectangle (6.5,0.5);
     \begin{scope}[xshift=3cm, yshift=1cm]
     \draw (1,0) node (a00){$A_1$};
        \draw (2,0)  node(a01){$A_1$};
        \draw (3, 0)  node(a02){$A_2$};
        \draw[mono] (a00)--(a01);
        \draw[mono] (a01)--(a02);
    \end{scope}
\draw (7.3, 1) node (i1){$,\quad i=1$};   
\end{scope}
\begin{scope}[yshift=-2.1cm, xshift=1cm]
    \draw[thick] (3.5,1.5) rectangle (6.5,0.5);
     \begin{scope}[xshift=3cm, yshift=1cm]
     \draw (1,0) node (a00){$A_1$};
        \draw (2,0)  node(a01){$A_2$};
        \draw (3, 0)  node(a02){$A_2$};
        \draw[mono] (a00)--(a01);
        \draw[mono] (a01)--(a02);
    \end{scope}
\draw (7.3, 1) node (i1){$,\quad i=2$};   
\end{scope}
    \end{tikzpicture}    
    \end{center}
\end{const}

As discussed in \cite[\textsection 4]{WaldhausenTopI}, the construction $\Seq^{\mathrm{disc}}_k(\cE)$
is however not natural in the variable $k$:

\begin{rmk}
\label{discSnotSimplicial}
The assignment $[k]\mapsto \Seq^{\mathrm{disc}}_k(\cE_R)$ does \emph{not} define a simplicial set.
Given $\sigma$ in $\Seq^{\mathrm{disc}}_3(\cE_R)$
\[
\begin{tikzpicture}
\begin{scope}[yshift=-2.1cm, xshift=1cm]
    \draw[thick] (3.5,1.5) rectangle (6.5,0.5);
     \begin{scope}[xshift=3cm, yshift=1cm]
     \draw (1,0) node (a00){$A_1$};
        \draw (2,0)  node(a01){$A_2$};
        \draw (3, 0)  node(a02){$A_3$};
        \draw[mono] (a00)--(a01);
        \draw[mono] (a01)--(a02);
    \end{scope}
\end{scope}
\end{tikzpicture}
\]
many simplicial identities hold, such as
\[
d_1d_2(\sigma)=A_3= d_1d_1(\sigma).
\]
However, any expression involving the face $d_0$ is problematic. For instance, when comparing $d_0d_0$ to $d_0d_1$, the best that one can say is that there is an isomorphism - yet generally not an equality -
of the form
\begin{equation}
\label{thirdiso}
d_0d_0(\sigma)=(A_3/A_1)/(A_2/A_1)\cong A_3/A_2=d_0d_1(\sigma).
\end{equation}
\end{rmk}

Overall, the construction $\Seq_{k}^{\mathrm{disc}}(\cE_R)$ suffers from the following shortcomings:
\begin{enumerate}[leftmargin=*, label=(\arabic*)]
    \item \label{NotSimp} The assignment $[k]\mapsto \Seq^{\mathrm{disc}}_k(\cE_R)$ does not define a simplicial set of sequences in $\cE_R$, as discussed in \cref{discSnotSimplicial}.
    \item \label{Symmetry} The definition of $\Seq_{k}^{\mathrm{disc}}(\cE_R)$ breaks the natural symmetry present in the structure of $\cE_R$, in that it prioritizes subobjects over quotients.
    \item \label{Groupoids} The \emph{set} of sequences $\Seq_{k}^{\mathrm{disc}}(\cE_R)$ is not so meaningful in itself, in that it is not capable of recognizing when two sequences are \emph{isomorphic}, and should instead be replaced with the \emph{groupoid} or \emph{space} of sequences.
\end{enumerate}
A way to address at least some of these issues is to consider the following variant (which can be seen as a variant of \cite[\textsection 1]{waldhausen} and as an instance of \cite[\textsection 4]{WaldhausenTopI}, taking in $\cE_R$ cofibrations to be monomorphisms and weak equivalences to be isomorphisms):

\begin{const}
    For $k\geq0$, let $\Seq_k(\cE_R)$ denote the groupoid of all sequences of $n$ subobjects
    in $\cE_R$; that is, the maximal groupoid in the category of functors from $[k]$ to $\cE_R$ spanned by the set of objects $\Seq^{\mathrm{disc}}_k(\cE_R)$.
    For $0\leq i\leq k$, let the $i$-th \emph{face map} $d_i\colon \Seq_k(\cE_R)\to \Seq_{k-1}(\cE_R)$ and the $i$-th \emph{degeneracy map} $s_i\colon \Seq_k(\cE_R)\to \Seq_{k+1}(\cE_R)$ to be defined by extending the formulas from \cref{discSeq} in the obvious way. 
\end{const}

By making appropriately coherent the isomorphism witnessing the failure of the simplicial identities for $\Seq_k(\cE_R)$ -- e.g.~the one from \eqref{thirdiso} -- one could show:

\begin{prop}
The assignment $[k]\mapsto \Seq_k(\cE_R) $ defines a pseudo simplicial groupoid.
\end{prop}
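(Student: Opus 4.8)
The plan is to exhibit the assignment $[k]\mapsto\Seq_k(\cE_R)$ as a (normalized) pseudofunctor $\Deltaop\to\gpd$, which is precisely what a pseudo simplicial groupoid is. Concretely I must supply three pieces of data and verify one condition: (i) for every morphism $\alpha\colon[m]\to[k]$ in $\Delta$ a functor $\alpha^*\colon\Seq_k(\cE_R)\to\Seq_m(\cE_R)$ extending the face and degeneracy formulas of \cref{discSeq}; (ii) for every composable pair $[\ell]\xrightarrow{\beta}[m]\xrightarrow{\alpha}[k]$ a natural isomorphism $\varphi_{\alpha,\beta}\colon\beta^*\alpha^*\Rightarrow(\alpha\beta)^*$, taken to be the identity whenever $\alpha$ or $\beta$ is an identity; and (iii) the associativity coherence relating $\varphi_{\alpha,\beta}$, $\varphi_{\beta,\gamma}$ and their whiskered composites for every composable triple. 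Before any of this I would check that the operations are functors of groupoids and not merely assignments on objects: a face or degeneracy sends an isomorphism of sequences to the induced isomorphism of the new sequence, and for $d_0$ (passage to the quotient by the first term) this induced isomorphism exists and is unique by the universal property of the cokernel of an admissible monomorphism in $\cE_R$; similarly $s_0$ inserts a chosen zero object, which is unique up to unique isomorphism.

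Next I would construct the functors $\alpha^*$ on \emph{all} of $\Delta$ rather than only on generators, since building a pseudofunctor by generators and relations requires a coherence theorem more delicate than the construction itself. Fix once and for all, for each admissible monomorphism in $\cE_R$, a chosen cokernel, and a chosen zero object; every $\alpha^*$ is then assembled from reindexing (which is strict) together with iterated passage to these chosen quotients. The identities that involve no quotients hold strictly, exactly as in the computation $d_1d_2(\sigma)=d_1d_1(\sigma)$ of \cref{discSnotSimplicial}, so the structure isomorphisms $\varphi_{\alpha,\beta}$ are needed only where two bracketings of iterated quotients must be compared. The generating instance is the third isomorphism theorem $(A_3/A_1)/(A_2/A_1)\cong A_3/A_2$ of \eqref{thirdiso}, together with its higher analogues; in each case $\varphi_{\alpha,\beta}$ is the \emph{canonical} comparison, namely the unique isomorphism commuting with the chosen quotient projections, and it is an isomorphism in $\Seq_m(\cE_R)$ because it is a levelwise isomorphism between two presentations of the same subquotients of a common sequence.

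The main obstacle is verifying the coherence condition (iii), and the point is that it holds \emph{for free}. Each $\alpha^*$ produces, on a given sequence, an object whose terms are subquotients $A_j/A_i$ of the original data, equipped with canonical projection maps from the $A$'s; and by the universal property of cokernels there is at most one isomorphism between two such presentations commuting with these projections. I would isolate this as a single uniqueness lemma: any natural transformation between two of these quotient-valued functors that respects the projection maps is determined. Granting it, for a composable triple $[j]\xrightarrow{\gamma}[\ell]\xrightarrow{\beta}[m]\xrightarrow{\alpha}[k]$ the two natural isomorphisms $\gamma^*\beta^*\alpha^*\Rightarrow(\alpha\beta\gamma)^*$ coming from the two sides of the associativity pentagon are natural isomorphisms between the \emph{same} pair of functors, each a composite of whiskered canonical comparisons and hence itself canonical; by the lemma they are equal, and the unit conditions hold strictly by normalization. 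The only real care needed is bookkeeping: making the notion of \emph{respecting the projections} precise enough that the uniqueness statement applies to every $\varphi_{\alpha,\beta}$ simultaneously, so that the pentagon and triangle identities reduce to an equality of maps that are individually unique. Conceptually, this failure to be strict -- and its repair only up to coherent isomorphism -- is exactly the shadow of shortcomings \ref{NotSimp} and \ref{Symmetry}, which the honest $S_\bullet$-construction will later resolve by recording the quotients as genuine data.
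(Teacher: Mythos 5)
The paper does not actually prove this proposition; it only remarks that one could establish it ``by making appropriately coherent the isomorphism witnessing the failure of the simplicial identities,'' such as the third-isomorphism-theorem comparison in \eqref{thirdiso}. Your proposal is a correct and standard elaboration of exactly that strategy: defining $\alpha^*$ for all morphisms of $\Delta$ via chosen cokernels and zero objects, taking the structure isomorphisms to be the canonical comparisons between iterated quotients, and deducing the coherence condition from the uniqueness of isomorphisms compatible with the (epimorphic) quotient projections is precisely how one would fill in the argument the paper leaves out.
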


The construction $\Seq_k(\cE_R)$ addresses \ref{Groupoids}, but it only partially addresses \ref{NotSimp} --
in that it involves a higher categorical metatheory -- and it does not address \ref{Symmetry}.

\subsubsection{The $S_\bullet$-construction}

An attempt at solving the issues \ref{NotSimp}, \ref{Groupoids} and \ref{Symmetry} going in a different direction is the following (cf.~\cite[\textsection 1.4]{waldhausen}):

Denote by $\Ar[k]=[k]^{[1]}$ the \emph{arrow category} of $[k]$; that is, the category of functors $[1]\to[k]$ and natural transformations.

\begin{const}
\label{Sdisc}
For $n\geq0$, let $S^{\mathrm{disc}}_k(\cE_R)$ denote the set of \emph{exact functors} $\Ar[k]\to\cE_R$.
That is, of functors $\Ar[k]\to\cE_R$ which:
\begin{itemize}[leftmargin=*]
\item send the objects of the form $(i,i)$ to a zero object in $\cE_R$,
\item send the morphisms of the form $(i,j)\to(i,j+\ell)$ to  monomorphisms in $\cE_R$,
\item send the morphisms $(i,j)\to(i+k,j)$ to epimorphisms in $\cE_R$,
\item sends the commutative squares involving two maps of the form $(i,j)\to (i+k,j)$  and two maps of the form $(i,j)\to (i,j+\ell)$ to bicartesian squares in $\cE_R$.
\end{itemize}
For instance, a functor $\Ar[3]\to\cE_R$ defining an object of $S^{\mathrm{disc}}_3(\cE_R)$ is of the form
\begin{center}
\begin{tikzpicture}[scale=0.9, inner sep=0pt, font=\footnotesize, baseline=-30pt]
\begin{scope}[xshift=0.0cm]
\draw[thick] (0.5,0.5) rectangle (4.5,-3.5);
\begin{scope}
    \draw (1,0) node(a00){{\footnotesize $Z_{0}$}};
\draw (2,0) node(a01){{\footnotesize $A_{01}$}};
\draw (2,-1) node(a11) {{\footnotesize $Z_{1}$}};
\draw  (3, -1)   node(a12){{\footnotesize $A_{12}$}};
\draw  (3, 0)   node(a02){{\footnotesize $A_{02}$}};
\draw (3,-2) node (a22){{\footnotesize $Z_{2}$}};
\draw  (4, 0)   node (a03){{\footnotesize $A_{03}$}};
\draw (4, -1)   node (a13){{\footnotesize $A_{13}$}};
\draw (4, -2)   node (a23){{\footnotesize $A_{23}$}};
\draw (4, -3) node (a33){{\footnotesize $Z_{3}$}};
\draw[mono] (a11)--(a12);
\draw[mono] (a12)--(a13);
\draw[mono] (a22)--(a23);
\draw[mono] (a00)--(a01);
\draw[mono] (a11)--(a12);
\draw[mono] (a01)--(a02);
\draw[mono] (a02)--(a03);
\draw[epi] (a01)--(a11);
\draw[epi] (a12)--(a22);
\draw[epi] (a02)--(a12);
\draw[epi] (a03)--(a13);
\draw[epi] (a13)--(a23);
\draw[epi] (a23)--(a33);
\end{scope}
\end{scope}
\end{tikzpicture}
\end{center}
with the convention that all objects featuring on the diagonal are zero objects, all horizontal morphisms are monomorphisms, all vertical morphisms are epimorphisms, and all squares are bicartesian squares.
    For $0\leq i\leq k$, let the $i$-th \emph{face map} $d_i\colon S^{\mathrm{disc}}_k(\cE_R)\to S^{\mathrm{disc}}_{k-1}(\cE_R)$ be given by skipping the $i$-th row and the $i$-th column. For instance, if $k=3$ the face maps $d_2$ and $d_3$ act as:
\begin{center}
\begin{tikzpicture}[scale=0.9, inner sep=0pt, font=\footnotesize, baseline=-30pt]
\def\hsh{0cm}
\begin{scope}[xshift=0.0cm]
\draw[thick] (0.5,0.5) rectangle (4.5,-3.5);
\begin{scope}
    \draw (1,0) node(a00){{\footnotesize $Z_{0}$}};
\draw (2,0) node(a01){{\footnotesize $A_{01}$}};
\draw (2,-1) node(a11) {{\footnotesize $Z_{1}$}};
\draw  (3, -1)   node(a12){{\footnotesize $A_{12}$}};
\draw  (3, 0)   node(a02){{\footnotesize $A_{02}$}};
\draw (3,-2) node (a22){{\footnotesize $Z_{2}$}};
\draw  (4, 0)   node (a03){{\footnotesize $A_{03}$}};
\draw (4, -1)   node (a13){{\footnotesize $A_{13}$}};
\draw (4, -2)   node (a23){{\footnotesize $A_{23}$}};
\draw (4, -3) node (a33){{\footnotesize $Z_{3}$}};
\draw[mono] (a11)--(a12);
\draw[mono] (a12)--(a13);
\draw[mono] (a22)--(a23);
\draw[mono] (a00)--(a01);
\draw[mono] (a11)--(a12);
\draw[mono] (a01)--(a02);
\draw[mono] (a02)--(a03);
\draw[epi] (a01)--(a11);
\draw[epi] (a12)--(a22);
\draw[epi] (a02)--(a12);
\draw[epi] (a03)--(a13);
\draw[epi] (a13)--(a23);
\draw[epi] (a23)--(a33);
\end{scope}
\end{scope}
\draw (4.8,-1.5) node (si){$\xmapsto{d_2}$};
\begin{scope}[xshift=4.7cm]
\draw[thick] (0.5,0.5) rectangle (4.5,-3.5);
\begin{scope}
    \draw (1,0) node(a00){{\footnotesize $Z_{0}$}};
\draw (2,0) node(a01){{\footnotesize $A_{01}$}};
\draw (2,-1) node(a11) {{\footnotesize $Z_{1}$}};
\draw  (4, 0)   node (a03){{\footnotesize $A_{03}$}};
\draw (4, -1)   node (a13){{\footnotesize $A_{13}$}};
\draw (4, -3) node (a33){{\footnotesize $Z_{3}$}};
\draw[mono] (a11)--(a13);
\draw[mono] (a00)--(a01);
\draw[mono] (a01)--(a03);
%
\draw[epi] (a01)--(a11);
\draw[epi] (a03)--(a13);
\draw[epi] (a13)--(a33);
\end{scope}
\end{scope}
\end{tikzpicture}
\end{center}

\begin{center}
\begin{tikzpicture}[scale=0.9, inner sep=0pt, font=\footnotesize, baseline=-30pt]
\def\hsh{0cm}
\begin{scope}[xshift=\hsh]
\draw[thick] (0.5,0.5) rectangle (4.5,-3.5);
\begin{scope}
    \draw (1,0) node(a00){{\footnotesize $Z_{0}$}};
\draw (2,0) node(a01){{\footnotesize $A_{01}$}};
\draw (2,-1) node(a11) {{\footnotesize $Z_{1}$}};
\draw  (3, -1)   node(a12){{\footnotesize $A_{12}$}};
\draw  (3, 0)   node(a02){{\footnotesize $A_{02}$}};
\draw (3,-2) node (a22){{\footnotesize $Z_{2}$}};
\draw  (4, 0)   node (a03){{\footnotesize $A_{03}$}};
\draw (4, -1)   node (a13){{\footnotesize $A_{13}$}};
\draw (4, -2)   node (a23){{\footnotesize $A_{23}$}};
\draw (4, -3) node (a33){{\footnotesize $Z_{3}$}};
\draw[mono] (a11)--(a12);
\draw[mono] (a12)--(a13);
\draw[mono] (a22)--(a23);
\draw[mono] (a00)--(a01);
\draw[mono] (a11)--(a12);
\draw[mono] (a01)--(a02);
\draw[mono] (a02)--(a03);
\draw[epi] (a01)--(a11);
\draw[epi] (a12)--(a22);
\draw[epi] (a02)--(a12);
\draw[epi] (a03)--(a13);
\draw[epi] (a13)--(a23);
\draw[epi] (a23)--(a33);
\end{scope}
\end{scope}
\draw (4.8cm+\hsh,-1.5) node (si){$\xmapsto{d_3}$};
\begin{scope}[xshift=4.7cm+\hsh]
\draw[thick] (0.5,0.5) rectangle (4.5,-3.5);
\begin{scope}
  \draw (1,0) node(a00){{\footnotesize $Z_{0}$}};
\draw (2,0) node(a01){{\footnotesize $A_{01}$}};
\draw (2,-1) node(a11) {{\footnotesize $Z_{1}$}};
\draw  (3, -1)   node(a12){{\footnotesize $A_{12}$}};
\draw  (3, 0)   node(a02){{\footnotesize $A_{02}$}};
\draw (3,-2) node (a22){{\footnotesize $Z_{2}$}};
\draw[mono] (a11)--(a12);
\draw[mono] (a00)--(a01);
\draw[mono] (a11)--(a12);
\draw[mono] (a01)--(a02);
%
\draw[epi] (a01)--(a11);
\draw[epi] (a12)--(a22);
\draw[epi] (a02)--(a12);
\end{scope}
\end{scope}
\end{tikzpicture}
\end{center}

For $0\leq i\leq k$, let
    the $i$-th \emph{degeneracy map} $s_i\colon S^{\mathrm{disc}}_k(\cE_R)\to S^{\mathrm{disc}}_{k+1}(\cE_R)$ be given by by doubling the $i$-th term. For instance, if $k=2$ the degeneracy maps $s_0$ and $s_1$ are as follows
    \begin{center}
        \begin{tikzpicture}[scale=0.9, inner sep=0pt, font=\footnotesize, baseline=-30pt]
\begin{scope}[xshift=0.0cm]
\draw[thick] (1.5,0.0) rectangle (4.5,-3);
\begin{scope}[xshift=1cm, yshift=-0.5cm]
  \draw (1,0) node(a00){{\footnotesize $Z_{0}$}};
\draw (2,0) node(a01){{\footnotesize $A_{01}$}};
\draw (2,-1) node(a11) {{\footnotesize $Z_{1}$}};
\draw  (3, -1)   node(a12){{\footnotesize $A_{12}$}};
\draw  (3, 0)   node(a02){{\footnotesize $A_{02}$}};
\draw (3,-2) node (a22){{\footnotesize $Z_{2}$}};
\draw[mono] (a11)--(a12);
\draw[mono] (a00)--(a01);
\draw[mono] (a11)--(a12);
\draw[mono] (a01)--(a02);
%
\draw[epi] (a01)--(a11);
\draw[epi] (a12)--(a22);
\draw[epi] (a02)--(a12);
\end{scope}
\end{scope}
\draw (4.8,-1.5) node (si){$\xmapsto{s_0}$};
\begin{scope}[xshift=4.7cm]
\draw[thick] (0.5,0.5) rectangle (4.5,-3.5);
\begin{scope}
    \draw (1,0) node(a00){{\footnotesize $Z_{0}$}};
\draw (2,0) node(a01){{\footnotesize $Z_{0}$}};
\draw (2,-1) node(a11) {{\footnotesize $Z_{0}$}};
\draw  (3, -1)   node(a12){{\footnotesize $A_{01}$}};
\draw  (3, 0)   node(a02){{\footnotesize $A_{01}$}};
\draw (3,-2) node (a22){{\footnotesize $Z_{1}$}};
\draw  (4, 0)   node (a03){{\footnotesize $A_{02}$}};
\draw (4, -1)   node (a13){{\footnotesize $A_{02}$}};
\draw (4, -2)   node (a23){{\footnotesize $A_{12}$}};
\draw (4, -3) node (a33){{\footnotesize $Z_{2}$}};
\draw[mono] (a11)--(a12);
\draw[mono] (a12)--(a13);
\draw[mono] (a22)--(a23);
\draw[mono] (a00)--(a01);
\draw[mono] (a11)--(a12);
\draw[mono] (a01)--(a02);
\draw[mono] (a02)--(a03);
\draw[epi] (a01)--(a11);
\draw[epi] (a12)--(a22);
\draw[epi] (a02)--(a12);
\draw[epi] (a03)--(a13);
\draw[epi] (a13)--(a23);
\draw[epi] (a23)--(a33);
\end{scope}
\end{scope}
\end{tikzpicture}
    \end{center}
        \begin{center}
        \begin{tikzpicture}[scale=0.9, inner sep=0pt, font=\footnotesize, baseline=-30pt]
\begin{scope}[xshift=0.0cm]
\draw[thick] (1.5,0.0) rectangle (4.5,-3);
\begin{scope}[xshift=1cm, yshift=-0.5cm]
  \draw (1,0) node(a00){{\footnotesize $Z_{0}$}};
\draw (2,0) node(a01){{\footnotesize $A_{01}$}};
\draw (2,-1) node(a11) {{\footnotesize $Z_{1}$}};
\draw  (3, -1)   node(a12){{\footnotesize $A_{12}$}};
\draw  (3, 0)   node(a02){{\footnotesize $A_{02}$}};
\draw (3,-2) node (a22){{\footnotesize $Z_{2}$}};
\draw[mono] (a11)--(a12);
\draw[mono] (a00)--(a01);
\draw[mono] (a11)--(a12);
\draw[mono] (a01)--(a02);
%
\draw[epi] (a01)--(a11);
\draw[epi] (a12)--(a22);
\draw[epi] (a02)--(a12);
\end{scope}
\end{scope}
\draw (4.8,-1.5) node (si){$\xmapsto{s_1}$};
\begin{scope}[xshift=4.7cm]
\draw[thick] (0.5,0.5) rectangle (4.5,-3.5);
\begin{scope}
    \draw (1,0) node(a00){{\footnotesize $Z_{0}$}};
\draw (2,0) node(a01){{\footnotesize $A_{01}$}};
\draw (2,-1) node(a11) {{\footnotesize $Z_{1}$}};
\draw  (3, -1)   node(a12){{\footnotesize $Z_{1}$}};
\draw  (3, 0)   node(a02){{\footnotesize $A_{01}$}};
\draw (3,-2) node (a22){{\footnotesize $Z_{1}$}};
\draw  (4, 0)   node (a03){{\footnotesize $A_{02}$}};
\draw (4, -1)   node (a13){{\footnotesize $A_{12}$}};
\draw (4, -2)   node (a23){{\footnotesize $A_{12}$}};
\draw (4, -3) node (a33){{\footnotesize $Z_{2}$}};
\draw[mono] (a11)--(a12);
\draw[mono] (a12)--(a13);
\draw[mono] (a22)--(a23);
\draw[mono] (a00)--(a01);
\draw[mono] (a11)--(a12);
\draw[mono] (a01)--(a02);
\draw[mono] (a02)--(a03);
\draw[epi] (a01)--(a11);
\draw[epi] (a12)--(a22);
\draw[epi] (a02)--(a12);
\draw[epi] (a03)--(a13);
\draw[epi] (a13)--(a23);
\draw[epi] (a23)--(a33);
\end{scope}
\end{scope}
\end{tikzpicture}
    \end{center}
\end{const}

One can then check that the construction $S^{\mathrm{disc}}_k(\cE)$ is natural in $k$:

\begin{prop}
The assignment $[k]\mapsto S_k^{\mathrm{disc}}(\cE_R) $ defines a simplicial set $S_\bullet^{\mathrm{disc}}(\cE_R)$.
\end{prop}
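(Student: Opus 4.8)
The plan is to exhibit $S^{\mathrm{disc}}_\bullet(\cE_R)$ as a subobject of a manifestly simplicial set, thereby reducing the verification of all simplicial identities to a single closure property. The starting observation is that the assignment $[k]\mapsto \Ar[k]=[k]^{[1]}$ is itself functorial: a morphism $f\colon[k]\to[l]$ in $\Delta$ induces, by postcomposition, a functor $\Ar[f]\colon\Ar[k]\to\Ar[l]$ sending an object $(i,j)$ (with $i\le j$) to $(f(i),f(j))$. This makes $\Ar[-]\colon\Delta\to\cat$ a cosimplicial category. Consequently, precomposition $F\mapsto F\circ\Ar[f]$ makes $[k]\mapsto\{\text{functors }\Ar[k]\to\cE_R\}$ into a simplicial set, with all simplicial identities holding \emph{strictly} and \emph{for free}: they are merely the images under precomposition of the (dual) cosimplicial identities satisfied by $\Ar[-]$. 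Under the identification of a staircase diagram with a functor $F$ via $A_{ij}=F(i,j)$, one checks directly that the face map $d_i$ of \cref{Sdisc} is precomposition along the coface $\delta^i\colon[k-1]\to[k]$ (omitting $i$) and that the degeneracy $s_i$ is precomposition along the codegeneracy $\sigma^i\colon[k+1]\to[k]$ (repeating $i$); this is exactly what ``skipping the $i$-th row and column'' and ``doubling the $i$-th term'' compute.

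Granting this, the only content to verify is that precomposition preserves the defining \emph{exactness} conditions, i.e.~that $F\circ\Ar[f]$ is again exact whenever $F$ is; the simplicial identities for $S^{\mathrm{disc}}_\bullet(\cE_R)$ are then inherited by restriction from the ambient simplicial set of all functors. Since every morphism of $\Delta$ factors through the cofaces $\delta^i$ and codegeneracies $\sigma^i$, and since preservation of exactness is closed under composition, it suffices to treat these generators. Here I would run through the four conditions in turn. Diagonal objects are sent to diagonal objects, as $\Ar[f](i,i)=(f(i),f(i))$, so $F\circ\Ar[f]$ still sends them to a zero object. A horizontal morphism $(i,j)\to(i,j')$ is carried to the horizontal morphism $(f(i),f(j))\to(f(i),f(j'))$, and similarly vertical morphisms stay vertical; hence their images under $F$ are monomorphisms, resp.~epimorphisms, as required.

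The one point that genuinely needs care -- and the only place where the argument is not purely formal -- is the behavior of squares under the codegeneracies, where the diagram collapses. When $f=\sigma^i$ repeats a value, a square of $\Ar[k+1]$ can be carried by $\Ar[\sigma^i]$ to a \emph{degenerate} square of $\Ar[k]$, in which one pair of opposite edges are identities; the same phenomenon turns some horizontal or vertical arrows into identities. For the horizontal and vertical conditions this is harmless, as an identity is both a monomorphism and an epimorphism. For the square condition one needs the elementary fact that a commutative square in $\cE_R$ having a parallel pair of identity morphisms is automatically bicartesian (it is trivially both a pullback and a pushout). Thus a non-degenerate square is sent by $F$ to a bicartesian square by hypothesis, while a degenerate one is bicartesian by this observation; in either case $F\circ\Ar[f]$ satisfies the square condition, completing the proof. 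This is precisely the structural improvement over $\Seq^{\mathrm{disc}}_\bullet$: because each subquotient is recorded \emph{inside} the diagram rather than formed by an external operation, the isomorphism ambiguity of \eqref{thirdiso} never arises, and the simplicial identities hold on the nose.
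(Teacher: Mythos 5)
Your proof is correct and is exactly the check the paper leaves to the reader (``one can then check\dots''): the functoriality of $[k]\mapsto\Ar[k]$ makes all functors $\Ar[\bullet]\to\cE_R$ into a simplicial set for free, and the only content is that exactness is preserved under precomposition, with the one nontrivial point being that degenerate squares arising from the codegeneracies are automatically bicartesian. This is the standard argument going back to \cite[\textsection 1.3]{waldhausen}, and your identification of $d_i$ and $s_i$ with precomposition along $\Ar[\delta^i]$ and $\Ar[\sigma^i]$ matches the explicit formulas of \cref{Sdisc}.
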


The construction $S_k^{\mathrm{disc}}(\cE_R)$ solves \ref{NotSimp} without requiring higher categorical metatheory, and it solves \ref{Symmetry}, but does not fix \ref{Groupoids}. So one can then mix the two adjustments $\Seq_k(\cE_R)$ and $S_k^{\mathrm{disc}}(\cE_R)$, culminating with Waldhausen's original $S_\bullet$-construction (cf.~\cite[\textsection 1.3]{waldhausen} in the special case that weak equivalences are chosen to be isomorphisms):

\begin{const}
    For $k\geq0$, let $S_k(\cE_R)$ denote the groupoid of exact functors from $\Ar[k]$ to $\cE_R$; that is, the maximal groupoid in the category of functors $\Ar[k]$ to $\cE_R$ spanned by the set of objects $S^{\mathrm{disc}}_k(\cE_R)$.
For $0\leq i\leq k$,
let the $i$-th \emph{face map} $d_i\colon S_k(\cE_R)\to S_{k-1}(\cE_R)$ and the $i$-th \emph{degeneracy map} $s_i\colon S_k(\cE_R)\to S_{k+1}(\cE_R)$ be
defined by extending the formulas from \cref{Sdisc} in the obvious way.
\end{const}

One can then check that this produces a simplicial object (cf.~\cite[\textsection 1.3]{waldhausen}), which we refer to as the \emph{$S_\bullet$-construction} of $\cE_R$:

\begin{prop}
The assignment $[k]\mapsto S_k(\cE_R) $ defines a simplicial groupoid $S_\bullet(\cE_R)$.
\end{prop}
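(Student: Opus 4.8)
The plan is to realize the entire construction as precomposition along a cosimplicial category, so that the simplicial identities hold strictly and for free. First I would record that $[k]\mapsto\Ar[k]$ assembles into a (strict) cosimplicial category $\Ar[-]\colon\Delta\to\cat$: a monotone map $\alpha\colon[m]\to[k]$ acts by postcomposition, sending a functor $[1]\to[m]$ to its composite $[1]\to[m]\to[k]$, and this assignment is strictly functorial in $\alpha$ because composition of functors is strictly associative and unital.

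Next I would identify the combinatorially-defined face and degeneracy maps of \cref{Sdisc} with precomposition along this cosimplicial structure. Concretely, for the coface $\delta^i\colon[k-1]\to[k]$ skipping $i$, the induced functor $\Ar(\delta^i)\colon\Ar[k-1]\to\Ar[k]$ reindexes objects $(p,q)$ in such a way that precomposition $F\mapsto F\circ\Ar(\delta^i)$ deletes the $i$-th row and column of the diagram $F$, and likewise the codegeneracies reproduce the doubling operation. The decisive point -- and the contrast with the $\Seq$ construction of \cref{discSnotSimplicial} -- is that here every subquotient $A_j/A_i$ is already recorded as an \emph{object} $A_{ij}$ of the diagram, so $d_0$ is an honest restriction rather than a quotient that must be recomputed; this is exactly what promotes the isomorphism \eqref{thirdiso} to an equality.

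With this identification in hand, I would invoke the strict $2$-functoriality of the precomposition assignment $\cC\mapsto\mathrm{Fun}(\cC,\cE_R)$, a $2$-functor $\cat^{\op}\to\CAT$ sending $\alpha\mapsto\alpha^*=(-)\circ\Ar(\alpha)$. Composing with $\Ar[-]$ yields a strict simplicial object $[k]\mapsto\mathrm{Fun}(\Ar[k],\cE_R)$ in categories. In particular all the simplicial identities hold on the nose, on objects and on morphisms alike, with no coherence data to verify.

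Finally I would cut this simplicial category down to $S_\bullet(\cE_R)$. By the preceding proposition the structure maps $\alpha^*$ send exact functors to exact functors, hence restrict to the full subcategories on $S^{\mathrm{disc}}_\bullet(\cE_R)$; and since precomposition (whiskering) carries natural isomorphisms to natural isomorphisms, they restrict further to the maximal subgroupoids, namely the $S_k(\cE_R)$. As passing to cores is itself functorial, this exhibits $[k]\mapsto S_k(\cE_R)$ as a simplicial groupoid. I expect no genuine obstacle beyond the bookkeeping of the second step: once the explicit face and degeneracy formulas are matched with precomposition, strictness is automatic, the entire difficulty of the $\Seq$ construction having been absorbed into the richer indexing category $\Ar[k]$.
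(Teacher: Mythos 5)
Your argument is correct and is essentially the approach the paper intends: the paper leaves the verification as a check (citing Waldhausen), but its later generalized formulation $S_k(X)=\Map(\cP\Delta[k],X)$, with structure maps induced by the cosimplicial object $\cP\Delta[\bullet]$, makes precisely the representability-and-precomposition mechanism you describe explicit. Your observation that the subquotients are already recorded as objects of the diagram, so that $d_0$ becomes an honest restriction and the isomorphism \eqref{thirdiso} is promoted to an equality, is exactly the point being made by contrast with \cref{discSnotSimplicial}.
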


The following proposition explains how the construction $S_{\bullet}(\cE_R)$ can really be understood as an appropriate correction of $\Seq_{\bullet}(\cE_R)$, in that (cf.~\cite[Lemma 2.4.9]{DKbook}):

\begin{prop}
\label{SandSeq}
For $k\geq0$, the canonical projection on ``the $0$-th row'' is an equivalence of groupoids
\[S_k(\cE_R)\xrightarrow{\simeq}\Seq_k(\cE_R).\]
\end{prop}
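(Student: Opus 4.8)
The plan is to verify directly that the projection
\[
p\colon S_k(\cE_R)\longrightarrow\Seq_k(\cE_R),\qquad F\longmapsto\bigl(F(0,1)\hookrightarrow\dots\hookrightarrow F(0,k)\bigr),
\]
which restricts an exact functor $F\colon\Ar[k]\to\cE_R$ to its $0$-th row (discarding the zero object $F(0,0)$), is essentially surjective and fully faithful as a morphism of groupoids.

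First I would establish essential surjectivity. Given a sequence $A_1\hookrightarrow\dots\hookrightarrow A_k$ and writing $A_0:=0$, I would set $F(i,j):=A_j/A_i$, the cokernel of the admissible monomorphism $A_i\hookrightarrow A_j$, which exists in $\cE_R$ thanks to the exact structure; the horizontal maps are the induced monomorphisms $A_j/A_i\hookrightarrow A_{j'}/A_i$ and the vertical maps the induced epimorphisms $A_j/A_i\twoheadrightarrow A_j/A_{i'}$. The diagonal entries are $A_i/A_i=0$, each elementary square exhibits the relevant subquotient as a pushout and is therefore bicartesian, and the $0$-th row recovers $A_1\hookrightarrow\dots\hookrightarrow A_k$ on the nose since $F(0,j)=A_j/0=A_j$. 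Thus $p$ hits every object up to the identity.

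The key to full faithfulness is the observation that the bicartesian-square condition rigidly reconstructs $F$ from its $0$-th row. Pasting the elementary bicartesian squares along the rectangle with corners $(0,i)$, $(0,j)$, $(i,i)$, $(i,j)$ and using $F(i,i)=0$ shows that $F(i,j)$ is canonically the cokernel of the top-row monomorphism $F(0,i)\hookrightarrow F(0,j)$. Hence a natural isomorphism $F\Rightarrow G$ is determined by its restriction to the $0$-th row, its component at $F(i,j)$ being the map it induces on cokernels; this yields faithfulness. Conversely, a natural isomorphism of the $0$-th rows induces, by the universal property of the cokernel, a unique isomorphism at each $F(i,j)$, and these are automatically compatible with every horizontal and vertical map, assembling into a natural isomorphism $F\Rightarrow G$ restricting to the given one; this yields fullness.

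The main obstacle lies in the essential-surjectivity step: one must ensure that the subquotients $A_j/A_i$ genuinely exist in $\cE_R$ and that all the squares are bicartesian. For $\cE_R$ abelian this is immediate, but for a general exact category (for instance, finitely generated projective $R$-modules) it requires the exactness axioms — closure under admissible quotients together with the coherence of cokernels under composites of admissible monomorphisms, i.e.\ a nine-lemma-type verification that iterated subquotients assemble into bicartesian squares. Once the cokernel description of $F(i,j)$ is in place, the full-faithfulness half is formal.
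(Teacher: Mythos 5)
Your proposal is correct and follows essentially the same route as the paper: the paper also exhibits the inverse by placing zero objects on the diagonal and filling in the remaining entries as pushouts (equivalently, the cokernels $A_j/A_i$ you describe), relying on the essential uniqueness of these completions. Your explicit split into essential surjectivity and full faithfulness, with the pasting argument identifying $F(i,j)$ as the cokernel of $F(0,i)\hookrightarrow F(0,j)$, is just a slightly more detailed packaging of the same verification the paper leaves implicit.
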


\begin{proof}[Idea]
For instance, for $k=3$ this map of groupoids can be depicted as
\begin{center}
\begin{tikzpicture}[scale=0.9, inner sep=0pt, font=\footnotesize, baseline=-30pt]
\begin{scope}[xshift=0.0cm]
\draw[thick] (0.5,0.5) rectangle (4.5,-3.5);
\begin{scope}
    \draw (1,0) node(a00){{\footnotesize $Z_{0}$}};
\draw (2,0) node(a01){{\footnotesize $A_{01}$}};
\draw (2,-1) node(a11) {{\footnotesize $Z_{1}$}};
\draw  (3, -1)   node(a12){{\footnotesize $A_{12}$}};
\draw  (3, 0)   node(a02){{\footnotesize $A_{02}$}};
\draw (3,-2) node (a22){{\footnotesize $Z_{2}$}};
\draw  (4, 0)   node (a03){{\footnotesize $A_{03}$}};
\draw (4, -1)   node (a13){{\footnotesize $A_{13}$}};
\draw (4, -2)   node (a23){{\footnotesize $A_{23}$}};
\draw (4, -3) node (a33){{\footnotesize $Z_{3}$}};
\draw[mono] (a11)--(a12);
\draw[mono] (a12)--(a13);
\draw[mono] (a22)--(a23);
\draw[mono] (a00)--(a01);
\draw[mono] (a11)--(a12);
\draw[mono] (a01)--(a02);
\draw[mono] (a02)--(a03);
\draw[epi] (a01)--(a11);
\draw[epi] (a12)--(a22);
\draw[epi] (a02)--(a12);
\draw[epi] (a03)--(a13);
\draw[epi] (a13)--(a23);
\draw[epi] (a23)--(a33);
\end{scope}
\end{scope}
\draw (4.8,-1.5) node (si){$\mapsto$};
\begin{scope}[xshift=4.7cm]
\draw[thick] (0.5,0.5) rectangle (4.5,-3.5);
\begin{scope}
\draw (2,0) node(a01){{\footnotesize $A_{01}$}};
\draw  (3, 0)   node(a02){{\footnotesize $A_{02}$}};
\draw  (4, 0)   node (a03){{\footnotesize $A_{03}$}};
%
\draw[mono] (a01)--(a02);
\draw[mono] (a02)--(a03);
\end{scope}
\end{scope}
\end{tikzpicture}
\end{center}
By completing including the zero module in a few positions of the diagram and completing all the relevant spans to pushout squares in $\cE_R$, which can be done is an essentially unique way, one can consider the assignment
\begin{center}
\begin{tikzpicture}[scale=0.9, inner sep=0pt, font=\footnotesize, baseline=-30pt]
\begin{scope}[xshift=0.0cm]
\draw[thick] (0.5,0.5) rectangle (4.5,-3.5);
\begin{scope}
\draw (2,0) node(a01){{\footnotesize $A_{01}$}};
\draw  (3, 0)   node(a02){{\footnotesize $A_{02}$}};
\draw  (4, 0)   node (a03){{\footnotesize $A_{03}$}};
%
\draw[mono] (a01)--(a02);
\draw[mono] (a02)--(a03);
\end{scope}
\end{scope}
\draw (4.8,-1.5) node (si){$\mapsto$};
\begin{scope}[xshift=4.7cm]
\draw[thick] (0.5,0.5) rectangle (4.5,-3.5);
\begin{scope}
    \draw (1,0) node(a00){{\footnotesize $0$}};
\draw (2,0) node(a01){{\footnotesize $A_{01}$}};
\draw (2,-1) node(a11) {{\footnotesize $0$}};
\draw  (3, -1)   node(a12){{\footnotesize $P_{12}$}};
\draw  (3, 0)   node(a02){{\footnotesize $A_{02}$}};
\draw (3,-2) node (a22){{\footnotesize $0$}};
\draw  (4, 0)   node (a03){{\footnotesize $A_{03}$}};
\draw (4, -1)   node (a13){{\footnotesize $P_{13}$}};
\draw (4, -2)   node (a23){{\footnotesize $P_{23}$}};
\draw (4, -3) node (a33){{\footnotesize $0$}};
\draw[mono] (a11)--(a12);
\draw[mono] (a12)--(a13);
\draw[mono] (a22)--(a23);
\draw[mono] (a00)--(a01);
\draw[mono] (a11)--(a12);
\draw[mono] (a01)--(a02);
\draw[mono] (a02)--(a03);
\draw[epi] (a01)--(a11);
\draw[epi] (a12)--(a22);
\draw[epi] (a02)--(a12);
\draw[epi] (a03)--(a13);
\draw[epi] (a13)--(a23);
\draw[epi] (a23)--(a33);
\begin{scope}[shift=($(a02)!.6!(a13)$), scale=0.4]
\draw +(.5,0) -- +(0,0)  -- +(0,-0.5);
\fill +(.25,-.25) circle (.05);
\end{scope}
\begin{scope}[shift=($(a01)!.6!(a12)$), scale=0.4]
\draw +(.5,0) -- +(0,0)  -- +(0,-0.5);
\fill +(.25,-.25) circle (.05);
\end{scope}
\begin{scope}[shift=($(a12)!.6!(a23)$), scale=0.4]
\draw +(.5,0) -- +(0,0)  -- +(0,-0.5);
\fill +(.25,-.25) circle (.05);
\end{scope}
\end{scope}
\end{scope}
\end{tikzpicture}
\end{center}
and verify it defines an inverse equivalence of groupoids
\[\Seq_k(\cE_R)\xrightarrow{\simeq}S_k(\cE_R).\]
for the desired map. The statement for general $k$ can be proven through an appropriate enhancement of this argument.
\end{proof}

We can take a closer look at how, besides for the sequences from \cref{SandSeq}, many other relevant ``homological'' features of $R$ are retained in $S_\bullet(\cE_R)$:

\begin{rmk}
We have that:
 \begin{enumerate}[leftmargin=*]
 \setcounter{enumi}{-1}
    \item If $\mathrm{Z}(\cE_R)$ denotes
    the groupoid of zero objects in $\cE_R$, there is an isomorphism of groupoids 
     \[S_0(\cE_R)\cong \mathrm{Z}(\cE_R).\]
     \item If $\Ob(\cE_R)$ denotes the core of $\cE_R$, i.e., the maximal groupoid inside $\cE_R$, as a special case of \cref{SandSeq} there is an equivalence of groupoids
     \[S_1(\cE_R)\simeq\Ob(\cE_R).\]
     \item If $\mathrm{ExSeq}(\cE_R)$
     denotes the groupoid
     of short exact sequences in $\cE_R$, there is an equivalence of groupoids
     \[S_2(\cE_R)\simeq \mathrm{ExSeq}(\cE_R),\]
 which can be depicted as
                \begin{center}
\begin{tikzpicture}[scale=0.9, inner sep=0pt, font=\footnotesize, baseline=-30pt]
\begin{scope}
\draw[thick] (0.5,0.5) rectangle (3.5,-2.5);
\begin{scope}
    \draw (1,0) node(a00){{\footnotesize $Z_{0}$}};
\draw (2,0) node(a01){{\footnotesize $A_{01}$}};
\draw (2,-1) node(a11) {{\footnotesize $Z_{1}$}};
\draw  (3, -1)   node(a12){{\footnotesize $A_{12}$}};
\draw  (3, 0)   node(a02){{\footnotesize $A_{02}$}};
\draw (3,-2) node (a22){{\footnotesize $Z_{22}$}};
\draw[mono] (a00)--(a01);
\draw[mono] (a11)--(a12);
\draw[mono] (a01)--(a02);
\draw[epi] (a01)--(a11);
\draw[epi] (a12)--(a22);
\draw[epi] (a02)--(a12);

\end{scope}
\end{scope}
\draw (4.0,-1) node (si){$\mapsto$};
\begin{scope}[xshift=4.0cm]
\draw[thick] (0.5,0.5) rectangle (3.5,-2.5);
\begin{scope}
\draw (2,0) node(a01){{\footnotesize $A_{01}$}};
\draw  (3, 0)   node(a02){{\footnotesize $A_{02}$}};
\draw[mono] (a01)--(a02);
\end{scope}
\end{scope}
\begin{scope}[xshift=4.0cm]
\draw[thick] (0.5,0.5) rectangle (3.5,-2.5);
\begin{scope}
\draw  (3, 0)   node(a02){{\footnotesize $A_{02}$}};
\draw  (3, -1)   node(a12){{\footnotesize $A_{12}$}};
\draw[epi] (a02)--(a12);
\end{scope}
\end{scope}
\end{tikzpicture}
\end{center}

\item If $\mathrm{BiCartSq}(\cE_R)$ denotes the groupoid of bicartesian squares involving two monomorphisms and two epimorphisms in $\cE_R$, there is an equivalence of groupoids
\[S_3(\cE_R)\simeq \mathrm{BiCartSq}(\cE_R),\]
which can be depicted as
    \begin{center}
\begin{tikzpicture}[scale=0.9, inner sep=0pt, font=\footnotesize, baseline=-30pt]
\begin{scope}[xshift=0.0cm]
\draw[thick] (0.5,0.5) rectangle (4.5,-3.5);
\begin{scope}
    \draw (1,0) node(a00){{\footnotesize $A_{00}$}};
\draw (2,0) node(a01){{\footnotesize $A_{01}$}};
\draw (2,-1) node(a11) {{\footnotesize $Z_{1}$}};
\draw  (3, -1)   node(a12){{\footnotesize $A_{12}$}};
\draw  (3, 0)   node(a02){{\footnotesize $A_{02}$}};
\draw (3,-2) node (a22){{\footnotesize $Z_{2}$}};
\draw  (4, 0)   node (a03){{\footnotesize $A_{03}$}};
\draw (4, -1)   node (a13){{\footnotesize $A_{13}$}};
\draw (4, -2)   node (a23){{\footnotesize $A_{23}$}};
\draw (4, -3) node (a33){{\footnotesize $Z_{3}$}};
\draw[mono] (a11)--(a12);
\draw[mono] (a12)--(a13);
\draw[mono] (a22)--(a23);
\draw[mono] (a00)--(a01);
\draw[mono] (a11)--(a12);
\draw[mono] (a01)--(a02);
\draw[mono] (a02)--(a03);
\draw[epi] (a01)--(a11);
\draw[epi] (a12)--(a22);
\draw[epi] (a02)--(a12);
\draw[epi] (a03)--(a13);
\draw[epi] (a13)--(a23);
\draw[epi] (a23)--(a33);
\end{scope}
\end{scope}
\draw (4.8,-1.5) node (si){$\mapsto$};
\begin{scope}[xshift=4.7cm]
\draw[thick] (0.5,0.5) rectangle (4.5,-3.5);
\begin{scope}
\draw  (3, -1)   node(a12){{\footnotesize $A_{12}$}};
\draw  (3, 0)   node(a02){{\footnotesize $A_{02}$}};
\draw  (4, 0)   node (a03){{\footnotesize $A_{03}$}};
\draw (4, -1)   node (a13){{\footnotesize $A_{13}$}};
\draw[mono] (a12)--(a13);
\draw[mono] (a02)--(a03);
\draw[epi] (a02)--(a12);
\draw[epi] (a03)--(a13);
\end{scope}
\end{scope}
\end{tikzpicture}
\end{center}
with inverse equivalence given by
               \begin{center}
\begin{tikzpicture}[scale=0.9, inner sep=0pt, font=\footnotesize, baseline=-30pt]
\begin{scope}[xshift=0.0cm]
\draw[thick] (0.5,0.5) rectangle (4.5,-3.5);
\begin{scope}
\draw  (3, -1)   node(a12){{\footnotesize $A_{12}$}};
\draw  (3, 0)   node(a02){{\footnotesize $A_{02}$}};
\draw  (4, 0)   node (a03){{\footnotesize $A_{03}$}};
\draw (4, -1)   node (a13){{\footnotesize $A_{13}$}};
\draw[mono] (a12)--(a13);
\draw[mono] (a02)--(a03);
\draw[epi] (a02)--
(a12);
\draw[epi] (a03)--(a13);
\end{scope}
\end{scope}
\draw (4.8,-1.5) node (si){$\mapsto$};
\begin{scope}[xshift=4.7cm]
\draw[thick] (0.5,0.5) rectangle (4.5,-3.5);
\begin{scope}
\draw  (3, -1)   node(a12){{\footnotesize $A_{12}$}};
\draw  (3, 0)   node(a02){{\footnotesize $A_{02}$}};
\draw  (4, 0)   node (a03){{\footnotesize $A_{03}$}};
\draw (4, -1)   node (a13){{\footnotesize $A_{13}$}};
    \draw (1,0) node(a00){{\footnotesize $0$}};
\draw (2,0) node(a01){{\footnotesize $P_{01}$}};
\draw (2,-1) node(a11) {{\footnotesize $0$}};
\draw (3,-2) node (a22){{\footnotesize $0$}};
\draw (4, -2)   node (a23){{\footnotesize $P_{23}$}};
\draw (4, -3) node (a33){{\footnotesize $0$}};
\draw[mono] (a11)--(a12);
\draw[mono] (a12)--(a13);
\draw[mono] (a22)--(a23);
\draw[mono] (a00)--(a01);
\draw[mono] (a11)--(a12);
\draw[mono] (a01)--(a02);
\draw[mono] (a02)--(a03);
\draw[epi] (a01)--(a11);
\draw[epi] (a12)--(a22);
\draw[epi] (a02)--
(a12);
\draw[epi] (a03)--(a13);
\draw[epi] (a13)--(a23);
\draw[epi] (a23)--(a33);
\begin{scope}[shift=($(a12)!.6!(a23)$), scale=0.4]
\draw +(.5,0) -- +(0,0)  -- +(0,-0.5);
\fill +(.25,-.25) circle (.05);
\end{scope}
\begin{scope}[shift=($(a01)!.4!(a12)$), scale=0.4]
\draw +(-.5,0) -- +(0,0)  -- +(0,.5);
\fill +(-.25,.25) circle (.05);
\end{scope}
\end{scope}
\end{scope}
\end{tikzpicture}
\end{center}
\end{enumerate}
\end{rmk}

From $S_\bullet(\cE_R)$, one can define the $K$-theory space $K(\cE_R)$ (cf.~\cite[\textsection 1.3]{waldhausen}), by further applying $\mathrm{Real}\colon\gpd^{\Delta^{\op}}\to\cS^{\Delta^{\op}}\to\cS$, the realization of simplicial groupoids through simplicial spaces:

\begin{const} 
The \emph{$K$-theory space} of $\cE_R$ is
\[
K(\cE_R) \coloneqq \Omega(\mathrm{Real} (S_\bullet(\cE_R))).\]
\end{const}

\begin{rmk}
\label{KtheoryRing}
If $\cE_R$ is the category of modules over $R$, then $K(\cE_R)$ is contractible (see e.g.~\cite[\textsection II.2]{WeibelKBook}).
Instead, if $\cE_R$ is the category of finitely generated projective modules over $R$, then $K(\cE_R)$ is typically interesting, and it is what one refers to as the $K$-theory space $K(R)$ of the ring $R$. The homotopy groups of $K(R)$ have been fully computed for the case of $R$ being a finite field (see \cite[Theorem 8]{QuillenCohomologyK}),
and are only partially known for $R=\mathbb Z$ (see \cite{WeibelIntegers}).
\end{rmk}

We will see later in \cref{Section2} how the space $K(\cE_R)$ can be delooped infinitely many times, so it actually comes from an $\Omega$-spectrum.

\subsection{Generalized $S_\bullet$-construction}

We now mention the generalizations of Waldhausen's construction that have been studied in the literature, culminating with a detailed description in \cref{SdotForSigma} for a rather general one.

\subsubsection{Pointed stable contexts}

\label{stable}

We collect here all extensions of Waldhausen's $S_\bullet$-construction to a generalized input $\cE$ that maintains a version of the properties of ``stability'' and ``pointedness'' (or more generally ``augmentation'').
The structure of $\cE$ needs to include a class of objects, a class of distinguished objects, two classes of morphisms -- horizontal and vertical -- and a class of squares. The properties required of $\cE$ roughly encode the following, to be understood in a strict or weak sense, depending on the context:
\begin{itemize}[leftmargin=*, label=$\diamond$]
    \item {\bf pasting}: the fact that the horizontal morphisms can be composed, vertical morphisms can be composed, and squares can be composed both horizontally and vertically;
    \item {\bf pointedness} (or more generally {\bf augmentation}): the fact that there is a distinguished class of objects -- typically the class of zero objects of some sort -- with the property that every object receives uniquely a horizontal morphism from an object in this distinguished class, and every object maps uniquely through a vertical morphism to an object in this distinguished class;
    \item {\bf stability}: the fact that every square is uniquely determined by the span in its boundary and also uniquely determined by the cospan in its boundary, through the depicted assignments:
\begin{equation}
\label{SpanCospan}
\begin{tikzpicture}[scale=0.9, inner sep=0pt, font=\footnotesize, baseline=-15pt]
\begin{scope}[xshift=0.0cm]
\draw[thick] (2.5,0.5) rectangle (4.5,-1.5);
\begin{scope}
\draw  (3, -1)   node(a12){{\footnotesize $A_{12}$}};
\draw  (3, 0)   node(a02){{\footnotesize $A_{02}$}};
\draw  (4, 0)   node (a03){{\footnotesize $A_{03}$}};
\draw (4, -1)   node (a13){{\footnotesize $A_{13}$}};
\draw[mono] (a12)--(a13);
\draw[mono] (a02)--(a03);
\draw[epi] (a02)--(a12);
\draw[epi] (a03)--(a13);
\end{scope}
\end{scope}
\draw (4.8,-0.5) node (si){$\mapsto$};
\draw (2.1,-0.5) node (si){\rotatebox{180}{$\mapsto$}};
\begin{scope}[xshift=0.0cm]
\draw[thick] (2.5,0.5) rectangle (4.5,-1.5);
\begin{scope}
\draw  (3, -1)   node(a12){{\footnotesize $A_{12}$}};
\draw  (3, 0)   node(a02){{\footnotesize $A_{02}$}};
\draw  (4, 0)   node (a03){{\footnotesize $A_{03}$}};
\draw (4, -1)   node (a13){{\footnotesize $A_{13}$}};
\draw[mono] (a12)--(a13);
\draw[mono] (a02)--(a03);
\draw[epi] (a02)--(a12);
\draw[epi] (a03)--(a13);
\end{scope}
\end{scope}
\begin{scope}[xshift=2.7cm]
\draw[thick] (2.5,0.5) rectangle (4.5,-1.5);
\begin{scope}
\draw  (3, -1)   node(a12){{\footnotesize $A_{12}$}};
\draw  (4, 0)   node (a03){{\footnotesize $A_{03}$}};
\draw (4, -1)   node (a13){{\footnotesize $A_{13}$}};
\draw[mono] (a12)--(a13);
\draw[epi] (a03)--(a13);
\end{scope}
\end{scope}
\begin{scope}[xshift=-2.7cm]
\draw[thick] (2.5,0.5) rectangle (4.5,-1.5);
\begin{scope}
\draw  (3, -1)   node(a12){{\footnotesize $A_{12}$}};
\draw  (3, 0)   node(a02){{\footnotesize $A_{02}$}};
\draw  (4, 0)   node (a03){{\footnotesize $A_{03}$}};
\draw[mono] (a02)--(a03);
\draw[epi] (a02)--(a12);
\end{scope}
\end{scope}
\end{tikzpicture}
\end{equation}
\end{itemize}

Categorical structures that follow this scheme, roughly in the chronological order in which they were considered, include the case of $\cE$ being:
\begin{itemize}[leftmargin=*]
    \item an \emph{abelian category};
    a motivating example would be the abelian category of abelian groups, or more generally the abelian category of modules over a ring $R$.
    \item a \emph{(proto)exact category}, as in \cite[\textsection 1.2]{QuillenK} and \cite[\textsection 2.4]{DKbook}; a motivating example would be the exact category of finitely generated abelian groups, or more generally the exact category of finitely generated modules over a ring $R$, or the proto-exact category of finite pointed sets;
     \item a \emph{stable $\infty$-category}, as in \cite{LurieStable};
    a motivating example would be the stable $\infty$-category of spectra, or more generally the stable $\infty$-category of (compact) modules over a ring spectrum. 
    \item an \emph{exact $\infty$-category}, as in \cite[\textsection 7.2]{DKbook} (cf.~also \cite{BarwickExact}); a motivating example would be the exact $\infty$-category of connective spectra.
    \item a \emph{stable pointed double category} (or more generally a \emph{stable augmented double category}), as in \cite[\textsection3]{BOORS1}; a motivating example is the path construction of a (reduced) $2$-Segal set (and in fact it is shown as \cite[Theorem 6.1]{BOORS1} that all examples are of this kind).
    \item a \emph{stable pointed double Segal space}
    and more generally a \emph{stable augmented augmented double Segal space}, as in \cite[\textsection2.5]{BOORS3}; a motivating example is the path construction of a (reduced) $2$-Segal space (and in fact it is shown as \cite[Theorem~6.1]{BOORS3} that all examples are of this kind).
    \item a \emph{CGW category} as in \cite[\textsection2]{CZdevissage}, (cf.~also an \emph{ACGW category} as in \cite[\textsection5]{CZdevissage},  and an \emph{ECGW category} as in \cite[Part~1]{SarazolaShapiro}); a motivating example would be the category of reduced schemes of finite type.
    \item an \emph{isostable squares category} as in \cite[\textsection3]{CalleSarazola}; a motivating example is the squares category of polytopes from \cite[Examples~2.14]{CalleSarazola}
\end{itemize}

These different frameworks can be organized as follows:
\begin{center}
 \begin{tikzpicture}
\def\vsp{1.2cm}
\def\hspo{3.5cm}
\def\hspt{4.5cm}
  \draw (-1+\hspo,-\vsp) node[category3] (abcat){abelian categories};
   \draw (-1+\hspo,-2*\vsp) node[category3] (exact){exact categories};
   \draw (-1+\hspo, -3*\vsp) node[category3] (CGW){CGW categories};
    \draw (-1+\hspo, -3.9*\vsp) node[category3] (isostable){isostable squares categories};
   \draw (-1,-3.5*\vsp) node[category3] (SPDC){stable pointed double categories};
    \draw (-1,-4.7*\vsp) node[category3] (SADC){stable augmented double categories};
  \draw (-1+\hspo+\hspt,-\vsp) node[category3] (stableIC){stable $\infty$-categories};
  \draw (-1+\hspo+\hspt, -2*\vsp) node[category3] (exactIC){exact $\infty$-categories};
  \draw (-1+\hspo+\hspt, -3.5*\vsp) node[category3] (SPDS){stable pointed double Segal spaces};
  \draw (-1+\hspo+\hspt, -4.7*\vsp) node[category3] (SADS){stable augmented double Segal spaces};
\draw[-stealth] (abcat)--(stableIC);
\draw[-stealth, dashed] ($(exact.east)+(0,0.1)$)--($(exactIC.west)+(0,0.1cm)$);
\draw[-stealth] ($(exact.east)+(0,-0.1)$)--($(exactIC.west)+(0,-0.1cm)$);
\draw[-stealth, dashed] (SPDC)--(CGW);
\draw[-stealth, dashed] (CGW)--(SPDS);
\draw[-stealth] (SPDC)--(isostable);
\draw[-stealth, dashed] (isostable)--node(label1){$?$}(SPDS);
\draw[-stealth] (SADC)--(SADS);
%
\draw[-stealth] (abcat)--(exact);
\draw[-stealth] (stableIC)--(exactIC);
\draw[-stealth] (exactIC)--(SPDS);
\draw[-stealth] (exact)--(CGW);
\draw[-stealth] (SPDC)--(SADC);
\draw[-stealth] (SPDS)--(SADS);
 \end{tikzpicture}
\end{center}
Here, the horizontal direction roughly encodes the generalization from the strict to a weaker metatheory, while the vertical direction progressively favors structure over properties.

Some existing comparisons between these frameworks include:
\cite[\textsection 2]{QuillenK} for how exact categories recover abelian categories,
\cite[Example 7.2.3]{DKbook} for how exact $\infty$-categories recover exact categories,
\cite{NeemanExact} for how exact $\infty$-categories possibly recover exact categories through the derived $\infty$-category of chain complexes,
\cite[\textsection 1.3.2]{LurieHA} for how stable $\infty$-categories recover abelian categories,
\cite[Example~3.1]{CZdevissage} for how CGW categories recover exact categories,
\cite[Proposition 3.11]{CalleSarazola} for how isostable squares categories recover stable pointed double categories,
\cite{BOORS4} for how stable augmented double Segal spaces recover exact categories and stable $\infty$-categories through appropriate nerve constructions.

The study of how CGW categories recover pointed stable double categories and of how pointed stable double Segal spaces recover CGW categories is subject of ongoing work by some workshop participants (cf.~\cite[pp.~29-30]{Oberwolfach2024})
A comment on how CGW categories possibly relate to isostable square categories is in \cite[Remark~3.27]{CalleSarazola}.

An appropriate $S_\bullet$-construction was considered for all these situations. This was done in \cite[\textsection1.3]{waldhausen} 
for exact categories (as a special case of Waldhausen categories, there referred to as categories with cofibrations and weak equivalences), in \cite[Recollection 5.8]{BarwickExact} (as a special case of Waldhausen $\infty$-categories)
and \cite[\textsection2.4]{DKbook} for exact $\infty$-categories, in \cite[\textsection3]{BOORS1} for pointed or augmented stable double categories, in \cite[\textsection2,~\textsection7]{BOORS3} for pointed or augmented stable double Segal spaces, and in \cite[\textsection2.2]{CalleSarazola} for isostable squares categories. Many are recovered as a special case of the $S_\bullet$-construction for $\Sigma$-spaces, which will be recalled in \cref{SdotForSigma}.

Results establishing the compatibility between the various $S_\bullet$-constructions include \cite[\textsection2,~\textsection3]{BOORS4} for the compatibility of the one for exact categories and stable augmented double categories with the one for stable augmented double Segal spaces, and \cite[Proposition~3.6]{CalleSarazola} for the compatibility of the one for stable augmented double categories with the one for isostable squares categories.

The versions of $S_\bullet$-constructions considered for different frameworks, however, don't always agree in a strict sense. For instance, the $S_\bullet$-construction for abelian categories and the one for stable $\infty$-categories only agree up to equivalence upon taking the loop space of its geometric realization, essentially as an instance of the Gillet--Waldhausen theorem (see~\cite[Theorem 1.11.7]{ThomasonTrobaugh}).

\subsubsection{Pointed semi-stable context}

\label{NonStable}

One could also generalize in a different direction, dropping one half of the stability axioms, asking for a square to be determined by the span contained in its boundary; that is, only the left half of \eqref{SpanCospan}. For instance, this is the case when the class of distinguished squares are certain pushout squares in a given category, which are not necessarily pullbacks.
Categorical structures fit this framework, roughly in the chronological order in which they were considered, include the case of $\cW$ being:
\begin{itemize}[leftmargin=*]
    \item a \emph{Waldhausen category} as in \cite[Definition~II.9.1.1]{WeibelKBook}
    (originally referred to as \emph{category with cofibrations and weak equivalences}
    in \cite[\textsection 1.2]{waldhausen});
    a motivating example would be the Waldhausen category of pointed CW-complexes, or more generally the category of retractive spaces over a CW-complex $X$ (see~\cite[\textsection 1.1]{waldhausen};
    \item a \emph{Waldhausen $\infty$-category}, as in \cite[Definition 2.7]{BarwickKtheory}; this notion offers a common framework to talk about stable $\infty$-categories and Waldhausen categories;
    \item a \emph{proto-Waldhausen squares category}, as in \cite[\textsection2.3]{CalleSarazola};
    \item a \emph{semi-stable pointed/augmented double category}, by which we mean a double category that satisfies only one half of the stability property from \cite[\textsection 3]{BOORS1}; that is, the one corresponding to the left side of \eqref{SpanCospan}.
     \item a \emph{semi-stable pointed/augmented double Segal space}, by which we mean a double Segal space that only one half of the stability property from \cite[\textsection 2.5]{BOORS3}; that is, the one corresponding to the left side of \eqref{SpanCospan}.
\end{itemize}
These different frameworks could be organized as follows:
\begin{center}
 \begin{tikzpicture}
 \def\vsp{1.9cm}
 \def\hspo{3.4cm}
  \def\hspt{4.4cm}
   \draw (-1+\hspo,-2*\vsp) node[category4] (Wald){Waldhausen categories};
   \draw (-1+\hspo, -3*\vsp) node[category4] (protoWald){protoWaldhausen squares categories};
   \draw (-1,-3*\vsp) node[category4] (SPDC){semi-stable pointed double categories};
    \draw (-1+0.2cm,-4*\vsp) node[category7] (SADC){semi-stable augmented double categories};
   %
  \draw (-1+\hspo+\hspt, -2*\vsp) node[category4] (WaldIC){Waldhausen $\infty$-categories};
  \draw (-1+\hspo+\hspt, -3*\vsp) node[category4] (SPDS){semi-stable pointed double Segal spaces};
 \draw (-1+\hspo+0.8*\hspt, -4*\vsp) node[category7] (SADS){semi-stable augmented double Segal spaces}; 
\draw[-stealth] (Wald)--(WaldIC);
\draw[-stealth, dashed] (SPDC)--node[above](label1){$?$}(protoWald);
\draw[-stealth, dashed] (protoWald)--node[above](label2){$?$}(SPDS);
\draw[-stealth] (SADC)--(SADS);
%
\draw[-stealth] (Wald)--(protoWald);
\draw[-stealth, dashed] (WaldIC)--node[right](label3){$?$}(SPDS);
\draw[-stealth] (SPDC)--($(SADC.north west)!0.625!(SADC.north)$);
\draw[-stealth] (SPDS)--($(SADS.north east)!0.725!(SADS.north)$);
 \end{tikzpicture}
\end{center}

An appropriate $S_\bullet$-construction was considered for several of these situations. This was done in \cite[\textsection 1.3]{waldhausen}
for $\cW$ being a Waldhausen category, in \cite[\textsection 5]{BarwickKtheory} (through a slightly different though equivalent formalism)
for $\cW$ being a Waldhausen $\infty$-category, and as \cite[\textsection2.2]{CalleSarazola} for $\cW$ being a proto-Waldhausen squares category. An $S_\bullet$-construction for stable augmented double Segal spaces (in fact for any $\Sigma$-space) is given in \cref{SdotForSigma}.

Sometimes, however, the versions of $S_\bullet$-construction considered for different framework don't agree in a strict sense. For instance, as explained in \cite[Remark 2.34]{CalleSarazola}, the $S_\bullet$-construction for proto-Waldhausen categories and the one for Waldhausen categories only agree up to equivalence upon taking geometric realization.

\subsubsection{General context}

\label{SdotForSigma}
We describe here a
general framework for $S_\bullet$-const\-ruc\-tion recovering many of the discussed instances.

Let $\Sigma$ denote the category from \cite[Definition~2.9]{BOORS3}, obtained by freely adding a terminal object $[-1]$ to the category $\Delta\times\Delta$, and consider the category $\cS^{\Sigma^{\op}}$ of $\Sigma$-spaces. Roughly speaking, a $\Sigma$-space $X$ consists of a bisimplicial space $X_{\bullet,\bullet}$ together with an augmentation space $X_{-1}$ and an augmentation map $X_{-1}\to X_{0,0}$.

We briefly recall how the framework of $\Sigma$-spaces recovers the framework of exact categories through an appropriate nerve construction, as done in \cite[\textsection 2]{BOORS4}. Studied nerve constructions for other frameworks from \cref{stable,NonStable} include
\cite[\textsection 3]{BOORS4} for stable $\infty$-categories and \cite[\textsection 4]{BOORS4} for exact $\infty$-categories.

The datum of an \emph{exact category} $\cE$ consists of a category endowed with two classes of distinguished morphisms, called \emph{admissible monomorphisms} and \emph{admissible epimorphisms}, and a distinguished class of objects.  We refer the reader to \cite[\textsection 2]{QuillenK}
for a complete definition, and we just mention that the axioms imposed on $\cE$ guarantee that \emph{zero objects} exist, that admissible monomorphisms (resp.~admissible epimorphisms) can be composed, that pullback (resp.~pushout) of an admissible monomorphism (resp.~an admissible epimorphism) exists and is still an admissible monomorphism (resp.~an admissible epimorphism). This is in particular enough structure to talk about admissible \emph{short exact sequences} and \emph{bicartesian squares}. As a consequence of the axioms, appropriate versions of the properties of pointedness and stability follow.
An \emph{exact functor} is a functor that preserves the admissible short exact sequences in an appropriate way.

In most examples of exact categories $\cE_R$ based on categories of modules over a ring $R$, the admissible monomorphisms (resp.~admissible epimorphisms) are the module maps that are injective (resp.~surjective), and the zero objects are the trivial modules.

If $\cS$ denotes the category of spaces and $\cE x\cat$ denotes the category of exact categories and exact functors,
we recall the exact nerve construction $N^{\mathrm{ex}}\colon\cE x\cat\to\cS^{\Sigma^{\op}}$ from \cite[Definition~2.2]{BOORS4}.
Given an exact category $\cE$, let 
\[
N_{-1}^{\mathrm{ex}}(\cE)\coloneqq\mathrm Z(\cE)
\]
denote the groupoid of zero objects in $\cE$, and for $a,b\geq0$ we let
\[N^{\mathrm{ex}}_{a,b}(\cE)\coloneqq\Hom^{\mathrm{ex}}([a]\times[b],\cE)\]
denote the groupoid of \emph{exact functors} from $[a]\times[b]$ to $\cE$; that is, of functors $[a]\times[b]\to\cE_R$ which:
\begin{itemize}[leftmargin=*]
\item send the morphisms of the form $(i,j)\to(i,j+\ell)$ to  monomorphisms in $\cE_R$,
\item send the morphisms $(i,j)\to(i+k,j)$ to epimorphisms in $\cE_R$,
\item sends the commutative squares involving two maps of the form $(i,j)\to (i+k,j)$  and two maps of the form $(i,j)\to (i,j+\ell)$ to bicartesian squares in $\cE_R$.
\end{itemize}
For instance, a functor $[2]\times[3]\to\cE_R$
defining an object of $N^{\mathrm{ex}}_{2,3}(\cE)$ is of the form
\begin{center}
\begin{tikzpicture}[scale=0.9, inner sep=0pt, font=\footnotesize, baseline=-30pt]
\begin{scope}[xshift=0.0cm]
\draw[thick] (0.5,0.5) rectangle (4.5,-2.5);
\begin{scope}[xshift=0.0cm]
   \draw (1,0) node(a00){{\footnotesize $A_{00}$}};
      \draw (1,-1) node(a10){{\footnotesize $A_{10}$}};
         \draw (1,-2) node(a20){{\footnotesize $A_{20}$}};
\draw (2,0) node(a01){{\footnotesize $A_{01}$}};
\draw (2,-1) node(a11) {{\footnotesize $A_{11}$}};
\draw  (3, -1)   node(a12){{\footnotesize $A_{12}$}};
\draw  (3, 0)   node(a02){{\footnotesize $A_{02}$}};
\draw  (4, 0)   node (a03){{\footnotesize $A_{03}$}};
\draw (4, -1)   node (a13){{\footnotesize $A_{13}$}};
\draw (4, -2)   node (a23){{\footnotesize $A_{23}$}};
\draw (2,-2) node (a21) {{\footnotesize $A_{21}$}};
\draw (3,-2) node (a22) {{\footnotesize $A_{22}$}};
\draw[mono] (a11)--(a12);
\draw[mono] (a12)--(a13);
\draw[mono] (a00)--(a01);
\draw[mono] (a10)--(a11);
\draw[mono] (a20)--(a21);
\draw[mono] (a11)--(a12);
\draw[mono] (a01)--(a02);
\draw[mono] (a02)--(a03);
\draw[mono] (a21)--(a22);
\draw[mono] (a22)--(a23);
\draw[epi] (a00)--(a10);
\draw[epi] (a10)--(a20);
\draw[epi] (a01)--(a11);
\draw[epi] (a11)--(a21);
\draw[epi] (a12)--(a22);
\draw[epi] (a02)--(a12);
\draw[epi] (a03)--(a13);
\draw[epi] (a13)--(a23);
\end{scope}
\end{scope}
%
\end{tikzpicture}
\end{center}
with the convention that all horizontal morphisms are monomorphisms, all vertical morphisms are epimorphisms, and all squares are bicartesian squares.

We now turn to the $S_\bullet$-construction in the context of $\Sigma$-spaces. Let $p\colon\Sigma\to\Delta$ denote the ordinal sum given by $[-1]\mapsto[0]$ and $[a,b]\mapsto[a+1+b]$, as considered in \cite[Definition 2.15]{BOORS3} (see \cite{RovelliChapter} in this volume). If  $\cS$ denotes the category of spaces, there is an induced adjunction
\[\cP=p^*\colon\cS^{\Delta^{\op}}\leftrightarrows\cS^{\Sigma^{\op}}\colon p_*=S_{\bullet}\]
between the category $\cS^{\Delta^{\op}}$ of simplicial spaces and the category $\cS^{\Sigma^{\op}}$ of $\Sigma$-spaces.

So we obtain the description of the $S_\bullet$-construction:

\begin{const}
For $k\geq0$, and $X$ a $\Sigma$-space, we let
\[S_k(X)\coloneqq\Map(\cP\Delta[k],X)\]
denote the space of maps of $\Sigma$-spaces $\cP\Delta[k]\to X$. For $0\leq i\leq n$, the $i$-th \emph{face map} $d_i\colon S_k(X)\to S_{k-1}(X)$ and the $i$-th \emph{degeneracy map} $s_i\colon S_k(X)\to S_{k+1}(X)$ are induced by the cosimplicial structure of $\cP\Delta[\bullet]$, and can be described in a similar fashion to those of \cref{Sdisc}.
\end{const}

In total we get the \emph{$S_\bullet$-construction} of any $\Sigma$-space $X$:

\begin{prop}
If $X$ is a $\Sigma$-space, the assignment $[k]\mapsto S_k(X) $ defines a simplicial space $S_\bullet(X)$.
\end{prop}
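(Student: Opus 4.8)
The plan is to exploit the fact that, by the very setup preceding the statement, $S_\bullet$ is nothing other than the right adjoint $p_*$ to $\cP = p^*$. Producing a simplicial space is then automatic, and the only real tasks are to match the pointwise formula $S_k(X) = \Map(\cP\Delta[k], X)$ with the abstract adjoint and to observe that the simplicial identities hold for free. First I would record that the Yoneda embedding $\Delta \hookrightarrow \cS^{\Delta^{\op}}$, $[k] \mapsto \Delta[k]$, is a cosimplicial object of $\cS^{\Delta^{\op}}$; that is, a functor $\Delta \to \cS^{\Delta^{\op}}$ whose coface and codegeneracy maps $\Delta[\delta^i]$ and $\Delta[\sigma^i]$ are induced by the coface maps $\delta^i\colon [k-1] \to [k]$ and codegeneracy maps $\sigma^i\colon [k+1] \to [k]$ of $\Delta$. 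Applying the functor $\cP = p^*\colon \cS^{\Delta^{\op}} \to \cS^{\Sigma^{\op}}$ levelwise yields a cosimplicial $\Sigma$-space $[k] \mapsto \cP\Delta[k]$, i.e.\ a functor $\Delta \to \cS^{\Sigma^{\op}}$ with structure maps $\cP\Delta[\delta^i]$ and $\cP\Delta[\sigma^i]$. This is precisely the ``cosimplicial structure of $\cP\Delta[\bullet]$'' referenced in the construction.

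Next I would form the composite
\[
\Delta^{\op} \xrightarrow{\,(\cP\Delta[\bullet])^{\op}\,} (\cS^{\Sigma^{\op}})^{\op} \xrightarrow{\,\Map(-,X)\,} \cS,
\]
where $\Map(-, X)$ is the contravariant mapping-space functor out of $\cS^{\Sigma^{\op}}$ with the $\Sigma$-space $X$ as fixed target. By construction this composite sends $[k]$ to $\Map(\cP\Delta[k], X) = S_k(X)$, and sends $\delta^i, \sigma^i$ respectively to the face map $d_i = \Map(\cP\Delta[\delta^i], X)$ and the degeneracy map $s_i = \Map(\cP\Delta[\sigma^i], X)$. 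Being a composite of functors it is itself a functor $\Delta^{\op} \to \cS$, hence a simplicial space; in particular the simplicial identities for the $d_i$ and $s_i$ hold automatically, as they are the image under $\Map(-,X)$ of the (opposite of the) cosimplicial identities satisfied by $\delta^i, \sigma^i$ in $\Delta$. This already delivers the statement.

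Finally, to reconcile this with the adjunction picture and to confirm that the description matches the concrete face and degeneracy maps (analogous to those of \cref{Sdisc}), I would invoke the Yoneda lemma together with the adjunction $\cP = p^* \dashv p_* = S_\bullet$: for each $k$ one has
\[
(p_* X)_k \cong \Map_{\cS^{\Delta^{\op}}}(\Delta[k], p_* X) \cong \Map_{\cS^{\Sigma^{\op}}}(\cP\Delta[k], X) = S_k(X),
\]
naturally in $[k] \in \Delta^{\op}$, so that $S_\bullet(X)$ is identified with the simplicial space $p_* X$ and the two candidate simplicial structures coincide. I expect no genuine obstacle here, since the content is entirely formal; the single point requiring care is the naturality of the Yoneda-plus-adjunction isomorphism in the variable $[k]$, which is exactly what guarantees that the concrete structure maps ``induced by the cosimplicial structure of $\cP\Delta[\bullet]$'' agree with those arising from the right adjoint $p_*$.
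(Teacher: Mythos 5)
Your argument is correct and is exactly the formal reasoning the paper intends: the paper leaves this proposition without explicit proof precisely because $S_\bullet = p_*$ is by construction a right adjoint landing in simplicial spaces, with $S_k(X) = \Map(\cP\Delta[k],X)$ identified with $(p_*X)_k$ via Yoneda and the adjunction, just as you spell out. No gaps.
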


The $S_\bullet$-construction for exact categories and the one for $\Sigma$-spaces are compatible through the exact nerve $N^{\mathrm{ex}}\colon\cE x\cat\to\cS^{\Sigma^{\op}}$:

\begin{rmk}
It is shown as \cite[Theorem 2.18]{BOORS4} that there is a commutative diagram of categories
\[
\begin{tikzcd}
\cE x\cat\arrow[d,"N^{\mathrm{ex}}" swap]\arrow[r,"S_{\bullet}"]&\gpd^{\Delta^{\op}}\arrow[d,"N_*"]\\
\cS^{\Sigma^{\op}}\arrow[r,"S_{\bullet}" swap]&\cS^{\Delta^{\op}}
\end{tikzcd}
\]
Indeed, for $n\geq0$ and $\cE$ an exact category, there is a natural bijection
\[
S_{k}(N^{\mathrm{ex}}\cE)\cong  \Map(\cP\Delta[k],N^{\mathrm{ex}}\cE)
 \cong N(\Hom^{\mathrm{ex}}(\Ar[k],\cE))\cong N(S_{k}(\cE))=(N_*S)_{k}(\cE).
\]
\end{rmk}

Through this generalized version of the $S_\bullet$-construction, one can now define the $K$-theory space for a very general type of input.
 If $X$ is a $\Sigma$-space
\[
K(X) \coloneqq \Omega(\mathrm{Real}S_\bullet(X)).\]

\subsection{$2$-Segality properties}

Recall the definition of \emph{$2$-Segal space} from
\linebreak
\cite{DKbook} (see \cite{SternChapter} in this volume) -- a.k.a.~\emph{decomposition space in \cite{GCKT1}} (see \cite{HackneyChapter} in this volume) -- and its generalization \emph{lower $2$-Segal space} from \cite[Definition~2.2]{Poguntke}.

We discuss how the properties of the input $\cE$ reflect into properties its $2$-Segality $S_\bullet$-construction. We will compose any existing $S_\bullet$-construction valued in $\gpd^{\Delta^{\op}}$ from previous sections with the levelwise nerve functor $N_*\colon\gpd^{\Delta^{\op}}\to\cS^{\Delta^{\op}}$, keeping the same notation. The overall expectation is the following:
\begin{center}
 \begin{tikzpicture}
 \def\vsp{1.9cm}
 \def\hspo{5cm}
  \def\hspt{4.4cm}
   \draw (-1+\hspo,-2*\vsp) node[category5] (2Seg){2-Segal spaces};
   \draw (-1+\hspo, -3*\vsp) node[category5] (2SegPlus){lower 2-Segal spaces};
    \draw (-1+\hspo, -4*\vsp) node[category5] (SimpSp){simplicial spaces};
   \draw (-1,-2*\vsp) node[category5] (sadss){stable augmented double Segal spaces};
    \draw (-1,-3*\vsp) node[category5] (hsadss){semi-stable augmented double Segal spaces};
      \draw (-1,-4*\vsp) node[category5] (SigmaSp){$\Sigma$-spaces};
   %
\draw[-stealth] (sadss)--node[above]{$S_\bullet$}(2Seg);
\draw[-stealth, dashed] (hsadss)--node[above](abovelabel){$S_\bullet$} node[below](label2){$?$}(2SegPlus);
\draw[-stealth] (SigmaSp)--node[above]{$S_\bullet$}(SimpSp);
\draw[right hook-stealth] (sadss)--(hsadss);
\draw[right hook-stealth] (hsadss)--(SigmaSp);
\draw[right hook-stealth] (2Seg)--(2SegPlus);
 \draw[right hook-stealth] (2SegPlus)--(SimpSp);
 \end{tikzpicture}
\end{center}

\subsubsection{The $2$-Segal case}

\label{2Segal}

We start by treating the stable pointed case, namely the case of $\cE$ being one of the structures from \cref{stable}.

\begin{thm}
\label{2SegalityThm}
   If $\cE$ is any of the structures discussed in \cref{stable},
   the $S_\bullet$-construction $S_\bullet\cE$
    is a $2$-Segal space.
\end{thm}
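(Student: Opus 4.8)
The plan is to reduce everything to the universal case of $\Sigma$-spaces and then exploit the adjunction $\cP=p^*\dashv p_*=S_\bullet$ from \cref{SdotForSigma}. Each structure $\cE$ listed in \cref{stable} admits a nerve into $\Sigma$-spaces -- the exact nerve $N^{\mathrm{ex}}$ for exact categories, and its analogues for the other frameworks -- whose image lands among the \emph{stable augmented double Segal spaces}, which form a full subcategory of the $\Sigma$-spaces. Moreover the $S_\bullet$-construction commutes with these nerves: by the compatibility recorded above (see \cite[Theorem 2.18]{BOORS4}) one has a natural identification of $S_\bullet\cE$ (with the convention that we post-compose with the levelwise nerve $N_*$) with $S_\bullet(N^{\mathrm{ex}}\cE)$. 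Since the $2$-Segal condition is invariant under levelwise equivalence of simplicial spaces, it therefore suffices to prove a single statement covering all the cases of \cref{stable} at once: if $X$ is a stable augmented double Segal space, then $S_\bullet(X)$ is a $2$-Segal space. The first part of the proof would make this reduction precise.

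Next I would translate the $2$-Segal condition across the adjunction. Recall that $S_k(X)=\Map(\cP\Delta[k],X)$ and that $\cP=p^*$, being the left adjoint (restriction along $p$), preserves all colimits of $\Sigma$-spaces. Following \cite{DKbook}, $S_\bullet(X)$ is $2$-Segal precisely when, for every triangulation $\mathcal T$ of the $(n+1)$-gon with vertices $0,\dots,n$, the canonical map
\[
S_n(X)\longrightarrow \lim_{\mathcal T} S_\bullet(X)
\]
is an equivalence. Writing $\Delta[n]$ as the (non-invertible) colimit of the triangles and edges prescribed by $\mathcal T$ and applying $\cP$, this membrane condition becomes the requirement that $X$ be \emph{local} with respect to the induced map $\colim_{\mathcal T}\cP(\text{pieces})\to\cP\Delta[n]$ of $\Sigma$-spaces. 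By the standard reduction of \cite{DKbook} it is enough to treat triangulations, and by inductively cutting off one ``ear'' at a time this reduces to the elementary case of a single diagonal cut, i.e.\ to gluing two simplices along a shared face.

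The heart of the argument is then the explicit identification of $\cP\Delta[n]$ with the ``staircase'' $\Sigma$-space whose value on $X$ reproduces the triangular grid of bicartesian squares appearing in \cref{Sdisc}. The double Segal property of $X$ (Segality in each of the two simplicial directions) lets one reconstruct the value of $X$ on this staircase as an iterated limit over the individual unit squares, glued along their shared edges and corner objects, while the augmentation/pointedness of $X$ pins down the objects along the diagonal as zero objects. Stability enters exactly here: the requirement that each square be determined both by the span and by the cospan in its boundary, as in \eqref{SpanCospan}, is precisely what identifies the limit computed from a diagonal cut of the polygon (which only records a span, resp.\ a cospan) with the limit computed from the full grid, since stability fills the missing data back in essentially uniquely. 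Assembling these identifications shows $X$ is local with respect to each elementary diagonal cut, and the induction of the previous step then yields $2$-Segality.

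The main obstacle I anticipate is the bookkeeping in this final step: one must match the combinatorics of polygon triangulations with the concrete colimit decomposition of the staircase $\cP\Delta[n]$ and verify that the gluing diagrams of $\Sigma$-spaces so produced are genuinely sent by $X$ to limits. A subtlety specific to the weak frameworks (stable and exact $\infty$-categories, stable pointed and augmented double Segal spaces) is that ``uniquely determined'' must be read as ``determined up to a contractible space of choices,'' so the relevant comparison maps must be checked to be equivalences of spaces rather than bijections of sets; keeping this homotopy coherence under control throughout the induction is where the real care is required.
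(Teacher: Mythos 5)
Your overall strategy is sound and is essentially the one used in \cite{BOORS3} to establish the case of stable augmented double Segal spaces, but it is genuinely different from what the paper does. The paper's argument (given only as an idea) directly exhibits an inverse to the lowest-dimensional $2$-Segal map $(d_2,d_0)\colon S_3\cE\to S_2\cE\times_{S_1\cE}S_2\cE$: it fills in the missing entry of the staircase as a pushout of the visible span, uses pointedness and stability to complete the rest of the grid essentially uniquely, treats $(d_1,d_3)$ dually, and otherwise defers the general membrane maps and the remaining frameworks to the literature case by case. Your route --- decomposing $\cP\Delta[n]$ as a colimit indexed by a polygon triangulation, using that $\cP=p^*$ is a left adjoint to turn the membrane condition into a locality condition on $X$, and reducing by induction on ears to a single diagonal cut --- proves the universal statement once rather than framework by framework; what it costs is precisely the combinatorial bookkeeping you flag at the end, which is the actual content of \cite[Theorem~5.1]{BOORS3}.

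The genuine gap is in your opening reduction. You assert that every structure listed in \cref{stable} admits a nerve into $\Sigma$-spaces landing among the stable augmented double Segal spaces, compatibly with $S_\bullet$. This is available for exact categories, stable $\infty$-categories, and exact $\infty$-categories via \cite{BOORS4}, but not (yet) for CGW categories or isostable squares categories: the paper's diagram in \cref{stable} marks the arrows from these frameworks to stable pointed double Segal spaces as dashed (one with a ``?''), and explicitly describes the CGW comparison as ongoing work. The comparison \cite[Proposition~3.11]{CalleSarazola} goes the wrong way for your purposes, embedding stable pointed double categories into isostable squares categories rather than conversely. So for those two items on the list your argument does not literally apply, and one must either construct the missing nerve or run the $2$-Segality argument directly in that setting, as is done in \cite{CZdevissage} and in \cite[Theorem~B]{CalleSarazola}. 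With that caveat, and granting the colimit decomposition of $\cP\Delta[n]$ you describe, the rest of your outline is a correct proof of the theorem for the frameworks that do map to stable augmented double Segal spaces.
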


This is shown
as \cite[Theorem 10.10]{GCKT1} for $\cE$ being an abelian category,
as \cite[\textsection 2.4]{DKbook} for $\cE$ being a (proto-)exact category, as \cite[Theorem 4.8]{BOORS1} for $\cE$ being a stable augmented double category, 
as \cite[Theorem~5.1]{BOORS3} for $\cE$ being a stable augmented double Segal space,
as \cite[Theorem~B]{CalleSarazola} for $\cE$ being an isostable
squares category. We briefly illustrate how to treat one of the lowest dimensional $2$-Segal maps:
\begin{proof}[Idea]
    The $2$-Segal map
    \[(d_2,d_0)\colon S_3\cE\to S_2\cE\times_{S_1\cE}S_2\cE\]
   can be depicted as
       \begin{center}
\begin{tikzpicture}[scale=0.9, inner sep=0pt, font=\footnotesize, baseline=-30pt]
\begin{scope}[xshift=0.0cm]
\draw[thick] (0.5,0.5) rectangle (4.5,-3.5);
\begin{scope}
    \draw (1,0) node(a00){{\footnotesize $A_{00}$}};
\draw (2,0) node(a01){{\footnotesize $A_{01}$}};
\draw (2,-1) node(a11) {{\footnotesize $Z_{1}$}};
\draw  (3, -1)   node(a12){{\footnotesize $A_{12}$}};
\draw  (3, 0)   node(a02){{\footnotesize $A_{02}$}};
\draw (3,-2) node (a22){{\footnotesize $Z_{2}$}};
\draw  (4, 0)   node (a03){{\footnotesize $A_{03}$}};
\draw (4, -1)   node (a13){{\footnotesize $A_{13}$}};
\draw (4, -2)   node (a23){{\footnotesize $A_{23}$}};
\draw (4, -3) node (a33){{\footnotesize $Z_{3}$}};
\draw[mono] (a11)--(a12);
\draw[mono] (a12)--(a13);
\draw[mono] (a22)--(a23);
\draw[mono] (a00)--(a01);
\draw[mono] (a11)--(a12);
\draw[mono] (a01)--(a02);
\draw[mono] (a02)--(a03);
\draw[epi] (a01)--(a11);
\draw[epi] (a12)--(a22);
\draw[epi] (a02)--(a12);
\draw[epi] (a03)--(a13);
\draw[epi] (a13)--(a23);
\draw[epi] (a23)--(a33);
\end{scope}
\end{scope}
\draw (4.8,-1.5) node (si){$\mapsto$};
\draw[thick] (5.2,0.6)..controls (5,-1.5)..(5.2,-3.6);
\begin{scope}[xshift=9.2cm]
\draw[thick] (0.5,0.5) rectangle (4.5,-3.5);
\begin{scope}
\draw (2,-1) node(a11) {{\footnotesize $A_{11}$}};
\draw  (3, -1)   node(a12){{\footnotesize $A_{12}$}};
\draw (3,-2) node (a22){{\footnotesize $Z_{2}$}};
\draw (4, -1)   node (a13){{\footnotesize $A_{13}$}};
\draw (4, -2)   node (a23){{\footnotesize $A_{23}$}};
\draw (4, -3) node (a33){{\footnotesize $A_{33}$}};
\draw[mono] (a11)--(a12);
\draw[mono] (a12)--(a13);
\draw[mono] (a22)--(a23);
\draw[mono] (a11)--(a12);
%
\draw[epi] (a12)--(a22);
\draw[epi] (a13)--(a23);
\draw[epi] (a23)--(a33);
\end{scope}
\end{scope}

\draw (9.5,-1.5) node (si){$,$};

\begin{scope}[xshift=4.8cm]
\draw[thick] (0.5,0.5) rectangle (4.5,-3.5);
\begin{scope}
    \draw (1,0) node(a00){{\footnotesize $A_{00}$}};
\draw (2,0) node(a01){{\footnotesize $A_{01}$}};
\draw (2,-1) node(a11) {{\footnotesize $Z_{1}$}};
\draw  (4, 0)   node (a03){{\footnotesize $A_{03}$}};
\draw (4, -1)   node (a13){{\footnotesize $A_{13}$}};
\draw (4, -3) node (a33){{\footnotesize $Z_{3}$}};
\draw[mono] (a11)--(a13);
\draw[mono] (a00)--(a01);
\draw[mono] (a01)--(a03);
%
\draw[epi] (a01)--(a11);
\draw[epi] (a03)--(a13);
\draw[epi] (a13)--(a33);
\end{scope}
\end{scope}

\draw[thick] (13.8,0.6)..controls (14,-1.5)..(13.8,-3.6);
\end{tikzpicture}
\end{center}
Using the structure and properties of $\cE$, one can consider the assignment
 \begin{center}
\begin{tikzpicture}[scale=0.9, inner sep=0pt, font=\footnotesize, baseline=-30pt]
\def\l{1cm}
\begin{scope}[xshift={9.1cm-1.2*\l}]
\draw[thick] (0.5,0.5) rectangle (4.5,-3.5);
\begin{scope}
    \draw (1,0) node(a00){{\footnotesize $A_{00}$}};
\draw (2,0) node(a01){{\footnotesize $A_{01}$}};
\draw (2,-1) node(a11) {{\footnotesize $Z_{1}$}};
\draw  (3, -1)   node(a12){{\footnotesize $A_{12}$}};
\draw  (3, 0)   node(a02){{\footnotesize $P$}};
\draw (3,-2) node (a22){{\footnotesize $Z_{2}$}};
\draw  (4, 0)   node (a03){{\footnotesize $A_{03}$}};
\draw (4, -1)   node (a13){{\footnotesize $A_{13}$}};
\draw (4, -2)   node (a23){{\footnotesize $A_{23}$}};
\draw (4, -3) node (a33){{\footnotesize $Z_{3}$}};
\draw[mono] (a11)--(a12);
\draw[mono] (a12)--(a13);
\draw[mono] (a22)--(a23);
\draw[mono] (a00)--(a01);
\draw[mono] (a11)--(a12);
\draw[mono, dashed] (a01)--(a02);
\draw[mono] (a02)--(a03);
\draw[epi] (a01)--(a11);
\draw[epi] (a12)--(a22);
\draw[epi] (a02)--(a12);
\draw[epi] (a03)--(a13);
\draw[epi] (a13)--(a23);
\draw[epi] (a23)--(a33);
\begin{scope}[shift=($(a02)!.4!(a13)$), scale=0.4]
\draw +(-.5,0) -- +(0,0)  -- +(0,.5);
\fill +(-.25,.25) circle (.05);
\end{scope}
\end{scope}
\end{scope}
\draw ({9.5cm-1.3*\l},-1.5) node (si){$\mapsto$};
\draw[thick] (0.4,0.6)..controls (0.2,-1.5)..(0.4,-3.6);
\begin{scope}[xshift=3.25*\l, yshift=0.5*\l]
\draw[thick] (1.5,-0.5) rectangle (4.5,-3.5);
\begin{scope}
\draw (2,-1) node(a11) {{\footnotesize $Z_{1}$}};
\draw  (3, -1)   node(a12){{\footnotesize $A_{12}$}};
\draw (3,-2) node (a22){{\footnotesize $Z_{2}$}};
\draw (4, -1)   node (a13){{\footnotesize $A_{13}$}};
\draw (4, -2)   node (a23){{\footnotesize $A_{23}$}};
\draw (4, -3) node (a33){{\footnotesize $Z_{3}$}};
\draw[mono] (a11)--(a12);
\draw[mono] (a12)--(a13);
\draw[mono] (a22)--(a23);
\draw[mono] (a11)--(a12);
%
\draw[epi] (a12)--(a22);
\draw[epi] (a13)--(a23);
\draw[epi] (a23)--(a33);
\end{scope}
\end{scope}

\draw ({4.7cm-0.1*\l},-1.5) node (comma){$,$};

\begin{scope}
\draw[thick] (0.5,0.5) rectangle (4.5,-3.5);
\begin{scope}
    \draw (1,0) node(a00){{\footnotesize $A_{00}$}};
\draw (2,0) node(a01){{\footnotesize $A_{01}$}};
\draw (2,-1) node(a11) {{\footnotesize $Z_{1}$}};
\draw  (4, 0)   node (a03){{\footnotesize $A_{03}$}};
\draw (4, -1)   node (a13){{\footnotesize $A_{13}$}};
\draw (4, -3) node (a33){{\footnotesize $Z_{3}$}};
\draw[mono] (a11)--(a13);
\draw[mono] (a00)--(a01);
\draw[mono] (a01)--(a03);
%
\draw[epi] (a01)--(a11);
\draw[epi] (a03)--(a13);
\draw[epi] (a13)--(a33);
\end{scope}
\end{scope}

\draw[thick] ({9.1cm-1.3*\l},0.6)..controls ({9.3cm-1.3*\l},-1.5)..({9.1cm-1.3*\l},-3.6);

\end{tikzpicture}
\end{center}
and verify that it defines an inverse equivalence of groupoids
    \[ S_2\cE\times_{S_1\cE}S_2\cE\xrightarrow{\simeq} S_3\cE.\]
    The other map
    \[(d_1,d_3)\colon S_3(\cE)\to S_2(\cE)\times_{S_1(\cE)}S_2(\cE)\]
    can be treated analogously.
\end{proof}

\begin{rmk}
    One could also see that $S_\bullet(\cE)$ is a $2$-Segal space by observing that its edgewise subdivision $\mathrm{esd}(S_\bullet(\cE))$ is a Segal space (which is closely related to Quillen's $Q$-construction $Q(\cE)$ from \cite[\textsection I.2]{QuillenK}), hence by \cite[Theorem 2.11]{BOORS2} the $S_\bullet$-construction $S_\bullet(\cE)$ is a $2$-Segal space.
\end{rmk}

It was shown in \cite{BOORS3} (and \cite{BOORS1} for the discrete version of the statement, see also \cite{RovelliChapter} in this volume) that in fact all $2$-Segal spaces can be obtained through the $S_\bullet$-construction:

\begin{thm}
The $S_\bullet$-construction is an equivalence between $2$-Segal spaces and stable augmented double Segal spaces.
\end{thm}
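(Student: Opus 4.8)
The plan is to promote the adjunction $\cP = p^* \colon \cS^{\Delta^{\op}} \leftrightarrows \cS^{\Sigma^{\op}} \colon S_\bullet = p_*$ to an adjoint equivalence between the full subcategory of $2$-Segal spaces in $\cS^{\Delta^{\op}}$ and the full subcategory of stable augmented double Segal spaces in $\cS^{\Sigma^{\op}}$. For this it suffices to verify four things: that $S_\bullet$ sends stable augmented double Segal spaces to $2$-Segal spaces; that $\cP$ sends $2$-Segal spaces to stable augmented double Segal spaces; that the unit $\eta_X \colon X \to S_\bullet\cP X$ is an equivalence for each $2$-Segal space $X$; and that the counit $\epsilon_Y \colon \cP S_\bullet Y \to Y$ is an equivalence for each stable augmented double Segal space $Y$. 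Granting these, the restrictions of $\cP$ and $S_\bullet$ are mutually inverse equivalences.

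The first point is exactly \cref{2SegalityThm}. For the second I would use the explicit description of $\cP$ coming from the ordinal sum $p$: one has $(\cP X)_{a,b} = X_{a+1+b}$ and $(\cP X)_{-1} = X_0$, so that $\cP X$ is a double d\'ecalage-type fattening of $X$. The core of the argument is then a dictionary translating, via the combinatorics of $p$, between the defining conditions on the two sides: the horizontal and vertical Segal conditions of $\cP X$, the augmentation conditions comparing $(\cP X)_{-1} = X_0$ with $(\cP X)_{0,0} = X_1$ (which absorb the possible non-unitality of $X$), and the two halves of the stability condition all correspond to the family of $2$-Segal maps of $X$. Most tellingly, the stability condition for $\cP X$ requires a square $(\cP X)_{1,1} = X_3$ to be recoverable from the span, resp.\ the cospan, in its boundary, and these are precisely the two $2$-Segal equivalences $X_3 \xrightarrow{\simeq} X_2 \times_{X_1} X_2$ produced in the proof of \cref{2SegalityThm}, now read backwards. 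Checking that this correspondence is a bijection of conditions shows that $\cP X$ is stable augmented double Segal exactly when $X$ is $2$-Segal.

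The unit is comparatively soft: $(S_\bullet\cP X)_k = \Map(\cP\Delta[k], \cP X)$, and once $\cP X$ is known to be a stable augmented double Segal space this mapping space collapses onto $X_k$ by the same ``insert zero objects and complete the spans to bicartesian squares'' procedure that proves \cref{SandSeq}, with $\eta_X$ the resulting comparison. The genuinely hard step is the counit. Here $(\cP S_\bullet Y)_{a,b} = S_{a+1+b}(Y) = \Map(\cP\Delta[a+1+b], Y)$, and one must identify this with $Y_{a,b}$, extending the low-degree identifications $S_1 Y \simeq Y_{0,0}$, $S_2 Y \simeq Y_{1,0}$ and $S_3 Y \simeq Y_{1,1}$ (which for the exact nerve reproduce the equivalences with $\Ob(\cE)$, short exact sequences, and bicartesian squares recorded in the remark following \cref{SandSeq}). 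Concretely this is the reconstruction of an $(a+1)\times(b+1)$ grid of bicartesian squares out of an $\Ar[a+1+b]$-shaped diagram, and it is available only because of stability: the cospan-determination recovers the entries strictly above the diagonal, while the span-determination fills the interior squares. I expect the main obstacle to be the coherence of this filling --- arranging the span/cospan inverses so that they are functorial and homotopy-compatible across all bidegrees $(a,b)$ simultaneously, and checking that $\epsilon_Y$ is a levelwise equivalence of spaces rather than merely a bijection on path components. The cleanest way to discharge this is to realize both subcategories as Bousfield localizations and upgrade the whole comparison to a Quillen equivalence, after which the unit and counit need only be tested on representables.
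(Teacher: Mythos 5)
The paper does not actually prove this theorem: it is quoted from \cite{BOORS3} (with \cite{BOORS1} for the discrete version), so there is no in-text argument to compare against. That said, your outline is essentially the strategy of those references: the equivalence is induced by the adjunction $\cP=p^*\dashv p_*=S_\bullet$ coming from the ordinal sum, one checks that both functors restrict to the two full subcategories, and the comparison is organized as a Quillen equivalence between Bousfield localizations presenting $2$-Segal spaces and stable augmented double Segal spaces. Your identifications $(\cP X)_{a,b}=X_{a+1+b}$, $(\cP X)_{-1}=X_0$, and the dictionary matching horizontal/vertical Segal, augmentation, and stability conditions for $\cP X$ against the $2$-Segal maps of $X$ (with $(\cP X)_{1,1}=X_3$ and the two span/cospan recoveries being the two $2$-Segal equivalences $X_3\xrightarrow{\simeq}X_2\times_{X_1}X_2$) is the correct mechanism, as is your diagnosis that the counit is where the real work lies.

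Two soft spots. First, the augmentation conditions on $\cP X$ do not correspond to $2$-Segal conditions on $X$ but to \emph{unitality} conditions (the requirement that $X_0\times_{X_1}X_2\to X_1$ and its mirror be equivalences); the theorems of \cite{BOORS1,BOORS3} are accordingly stated for \emph{unital} $2$-Segal spaces, and the unqualified statement here relies on the separate, later fact that every $2$-Segal space is automatically unital. Your phrase about the augmentation ``absorbing non-unitality'' elides this: as written, your claimed bijection of conditions would prove that $\cP X$ is stable augmented double Segal iff $X$ is $2$-Segal \emph{and unital}, and you need one more input to drop the second adjective. Second, your closing remark that after passing to the Quillen adjunction between localizations ``the unit and counit need only be tested on representables'' is not right: representables are not local, and the unlocalized adjunction is nowhere near an equivalence. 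The derived unit must be checked on local (fibrant--cofibrant) objects, i.e.\ on actual $2$-Segal spaces and stable augmented double Segal spaces, which is exactly the hard coherence problem you correctly identify just before --- reconstructing the $(a+1)\times(b+1)$ grid from an $\Ar[a+1+b]$-diagram functorially in $(a,b)$ --- so the localization formalism does not let you avoid it.
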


\subsubsection{The lower $2$-Segal case}

\label{lower2Segal}

We now treat the pointed semi-stable cases, namely the case of $\cW$ being one of those from \cref{NonStable}. While one half of the proof of \cref{2SegalityThm} (and of the argument we sketched in \cref{NonStable}) can be reproduced at this further level of generality, the other half cannot.

In fact, lacking one half of the stability conditions, the $S_\bullet$-construction from \cref{NonStable} does not generally define a $2$-Segal space. To see this, let's consider the following explicit example (obtained by adjusting the one from \cite[Example~4.3]{BOORS2}.

\begin{rmk}
\label{Snot2Segal}
Let $\cW$ denote the Waldhausen category of finite pointed CW-complexes and pointed
cellular maps, with cellular embeddings as cofibrations and homotopy equivalences
as weak equivalences (a special case of \cite[\textsection1.1-1.2]{waldhausen}
with $X$ being a point).
Then we can see that the $2$-Segal map
    \[(d_2,d_0)\colon S_3(\cW)\to S_2(\cW)\times_{S_1(\cW)}S_2(\cW)\]
    is not injective on connected components, hence in particular it is not an equivalence.
    Indeed, if $P$ is a finite $2$-dimensional CW-complex which is not contractible but whose
suspension is contractible (for example, the classifying space of the perfect group
from \cite[Example 2.38]{HatcherAT}), the following two objects of $S_3(\cW)$ live in different connected components but map to the same connected component
in $S_2(\cW)\times_{S_1(\cW)}S_2(\cW)$ under the map $(d_2,d_0)$:
        \begin{center}
\begin{tikzpicture}[scale=0.9, inner sep=0pt, font=\footnotesize, baseline=-30pt]
\begin{scope}[xshift=0.0cm]
\draw[thick] (0.5,0.5) rectangle (4.5,-3.5);
\begin{scope}
    \draw (1,0) node(a00){{\footnotesize $*$}};
\draw (2,0) node(a01){{\footnotesize $P$}};
\draw (2,-1) node(a11) {{\footnotesize $*$}};
\draw  (3, -1)   node(a12){{\footnotesize $*$}};
\draw  (3, 0)   node(a02){{\footnotesize $P$}};
\draw (3,-2) node (a22){{\footnotesize $*$}};
\draw  (4, 0)   node (a03){{\footnotesize $C P$}};
\draw (4, -1)   node (a13){{\footnotesize $\Sigma P$}};
\draw (4, -2)   node (a23){{\footnotesize $\Sigma P$}};
\draw (4, -3) node (a33){{\footnotesize $*$}};
\draw[mono] (a11)--(a12);
\draw[mono] (a12)--(a13);
\draw[mono] (a22)--(a23);
\draw[mono] (a00)--(a01);
\draw[mono] (a11)--(a12);
\draw[mono] (a01)--(a02);
\draw[mono] (a02)--(a03);
\draw[-stealth] (a01)--(a11);
\draw[-stealth] (a12)--(a22);
\draw[-stealth] (a02)--(a12);
\draw[-stealth] (a03)--(a13);
\draw[-stealth] (a13)--(a23);
\draw[-stealth] (a23)--(a33);
\end{scope}
\end{scope}
\end{tikzpicture}
\quad\text{ and }\quad
\begin{tikzpicture}[scale=0.9, inner sep=0pt, font=\footnotesize, baseline=-30pt]
\begin{scope}[xshift=0.0cm]
\draw[thick] (0.5,0.5) rectangle (4.5,-3.5);
\begin{scope}
    \draw (1,0) node(a00){{\footnotesize $*$}};
\draw (2,0) node(a01){{\footnotesize $P$}};
\draw (2,-1) node(a11) {{\footnotesize $*$}};
\draw  (3, -1)   node(a12){{\footnotesize $\Sigma P$}};
\draw  (3, 0)   node(a02){{\footnotesize $CP$}};
\draw (3,-2) node (a22){{\footnotesize $*$}};
\draw  (4, 0)   node (a03){{\footnotesize $C P$}};
\draw (4, -1)   node (a13){{\footnotesize $\Sigma P$}};
\draw (4, -2)   node (a23){{\footnotesize $*$}};
\draw (4, -3) node (a33){{\footnotesize $*$}};
\draw[mono] (a11)--(a12);
\draw[mono] (a12)--(a13);
\draw[mono] (a22)--(a23);
\draw[mono] (a00)--(a01);
\draw[mono] (a11)--(a12);
\draw[mono] (a01)--(a02);
\draw[mono] (a02)--(a03);
\draw[-stealth] (a01)--(a11);
\draw[-stealth] (a12)--(a22);
\draw[-stealth] (a02)--(a12);
\draw[-stealth] (a03)--(a13);
\draw[-stealth] (a13)--(a23);
\draw[-stealth] (a23)--(a33);
\end{scope}
\end{scope}
\end{tikzpicture}
\end{center}
    In particular, the simplicial space $S_\bullet(\cW)$ is \emph{not} a $2$-Segal space.
\end{rmk}

Others are expected to suffer from the same flaw. However, one half of the proof of \cref{2SegalityThm} should survive:

\begin{expectation}
\label{Lower2Segal}
If $\cW$ is any of the structures discussed in \cref{NonStable} the $S_\bullet$-construction $S_\bullet\cW$
    is a lower $2$-Segal space.
\end{expectation}

This was shown in \cite[\textsection7,~\textsection8]{CarawanWaldhausen}
for $\cW$ being a Waldhausen category, and should be true for all listed examples.

One could study whether all lower $2$-Segal spaces arise as $S_\bullet$-construction:

\begin{question}
Is this $S_\bullet$-construction an equivalence between lower $2$-Segal spaces and semi-stable augmented
double Segal spaces? If not, what is its homotopy essential image?
\end{question}
 
This question is possibly treatable by adjusting the argument from \cite{BOORS3} (or \cite{BOORS1} for the discrete case).
              
\section{Iterated Waldhausen's $S_\bullet$-construction and $K$-theory spectrum}

\label{Section2}

We discuss how to deloop $K$-theory through an appropriate iteration of Waldhausen's construction under appropriate assumptions.

\subsection{Waldhausen's iterated $S_\bullet$-construction}

We recall Waldhausen's technique to deloop the $K$-theory space of $\cE_R$.

\begin{rmk}
If $\cE$ is an exact category, there is an exact category $[\Ar[k],\cE]$.
The objects are the exact functors from $\Ar[k]$ to $\cE$, and the exact structure is inherited from that of $\cE$ (and is essentially a special case of \cite[Lemma 3.15]{Penney}).
More precisely, the ordinary $S_\bullet$-construction can be lifted to a functor
\[
\widetilde S_\bullet\coloneqq[\Ar[\bullet],-]\colon\cE x\cat\to\cE x\cat^{\Delta^{\op}}.
\]
\end{rmk}

Using this, and inspired by \cite[(3.6)]{Penney} for the case $n=2$, we can define the \emph{iterated} $S_\bullet$-construction -- the multisimplicial groupoid $S^{(n)}_{\bullet,\dots,\bullet}(\cE_R)$ -- as follows:

\begin{const}
For $n\geq0$ and $ k_1,\dots, k_n\geq0$, let $S^{(n)}_{ k_1,\dots, k_n}(\cE_R)$ denote the groupoid
\[S^{(n)}_{ k_1,\dots, k_n}(\cE_R)\coloneqq\Hom^{\mathrm{ex}}(\Ar[ k_1]\times\dots\times\Ar[ k_n],\cE_R)\]
of \emph{exact functors} 
from $\Ar[ k_1]\times\dots\times\Ar[ k_n]$ to $\cE_R$. That is, of functors from $\Ar[k_1]\times\dots\times\Ar[k_n]$ to $\cE_R$ which:
\begin{itemize}[leftmargin=*]
\item send the objects of the form $((i_1,i_1),\dots,(i_n,i_n))$ to a zero object in $\cE_R$,
\item send the morphisms of the form \[((i_1,j_1),\dots,(i_n,j_n))\to((i_1,j_1+\ell_1),\dots,(i_n,j_n+\ell_n))\]
to monomorphisms in $\cE_R$,
\item send the morphisms of the form \[((i_1,j_1),\dots,(i_n,j_n))\to((i_1+k_1,j_1),\dots,(i_n+k_n,j_n))\]
to epimorphisms in $\cE_R$,
\item sends the commutative squares involving two maps of the form \[((i_1,j_1),\dots,(i_n,j_n))\to((i_1+k_1,j_1),\dots,(i_n+k_n,j_n))\]  and two maps of the form \[((i_1,j_1),\dots,(i_n,j_n))\to((i_1,j_1+\ell_1),\dots,(i_n,j_n+\ell_n))\] to bicartesian squares in $\cE_R$.
\end{itemize}
\end{const}

So in total we get a multisimplicial groupoid:

\begin{prop}
The assignment $[ k_1,\dots, k_n]\mapsto S^{(n)}_{ k_1,\dots, k_n}(\cE_R) $ defines an $n$-fold simplicial groupoid $S^{(n)}_{\bullet,\dots,\bullet}(\cE_R) $.
\end{prop}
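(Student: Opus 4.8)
The plan is to exhibit the assignment as the composite of an $n$-fold cosimplicial category with the contravariant operation of taking cores of exact functors into $\cE_R$, so that the entire $n$-fold simplicial structure is inherited formally. The starting observation is that $\Ar[-]=(-)^{[1]}$ is a functor $\cat\to\cat$, and that postcomposing the standard cosimplicial category $[\bullet]\colon\Delta\to\cat$ with it produces a cosimplicial category $\Ar[\bullet]\colon\Delta\to\cat$; this is precisely the structure that made $S_\bullet(\cE_R)$ simplicial in the single-variable case treated above. Taking the external product in each of the $n$ coordinates then yields an $n$-fold cosimplicial category
\[
\Delta^{\times n}\to\cat,\qquad ([k_1],\dots,[k_n])\mapsto \Ar[k_1]\times\cdots\times\Ar[k_n],
\]
whose coface and codegeneracy maps in the $a$-th variable are of the form $\id\times\cdots\times\Ar[\phi]\times\cdots\times\id$ and therefore act on a single factor of the product.

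First I would record that $S^{(n)}_{k_1,\dots,k_n}(\cE_R)=\core\,\Hom^{\mathrm{ex}}(\Ar[k_1]\times\cdots\times\Ar[k_n],\cE_R)$, with the $i$-th face and degeneracy maps in the $a$-th simplicial direction given by precomposition with $\Ar[\delta^i]$ and $\Ar[\sigma^i]$ in the $a$-th factor, exactly as in the description of \cref{Sdisc}. Since precomposition assembles into the contravariant functor $[\,-\,,\cE_R]$ on $\cat$, the simplicial identities in each fixed variable are immediate consequences of the cosimplicial identities for $\Ar[\bullet]$, and the commutation of structure maps belonging to two distinct variables is immediate from the fact that they act on disjoint factors of the product. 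With the single-variable statement as the base case, this reduces the proposition to one well-definedness check.

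The main obstacle, and the only nonformal part, is to verify that precomposition with a map of the form $\Ar[\phi]$ carries exact functors to exact functors and isomorphisms to isomorphisms, so that it descends to the cores. For the latter, precomposition always preserves natural isomorphisms, so the maps restrict to the maximal groupoids. For the former, one checks that $\Ar[\phi]$, which sends an object $(i,j)$ to $(\phi(i),\phi(j))$, preserves each of the four defining features: diagonal objects go to diagonal objects, so the zero-object condition is preserved; morphisms that change only the second coordinate, resp.~only the first, are sent to morphisms of the same type, so the monomorphism and epimorphism conditions are preserved; and the distinguished squares are sent to distinguished squares. The care needed here lies entirely in the degenerate cases arising when $\phi$ is not injective: a monomorphism- or epimorphism-direction morphism may collapse to an identity, and a distinguished square may collapse to a morphism or a point. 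In each such case the image automatically satisfies the requirement, since identities are isomorphisms and hence both admissible monomorphisms and admissible epimorphisms, and a square two of whose parallel sides are identities is always bicartesian.

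An alternative, inductive route is available through the lift $\widetilde S_\bullet=[\Ar[\bullet],-]\colon\cE x\cat\to\cE x\cat^{\Dop}$ recorded above: one would set $S^{(n+1)}_{k_1,\dots,k_n,k_{n+1}}(\cE_R)=S^{(n)}_{k_1,\dots,k_n}([\Ar[k_{n+1}],\cE_R])$ and combine the exponential adjunction $[\,A,[B,\cE_R]\,]\cong[\,A\times B,\cE_R\,]$ with the naturality of $S^{(n)}$ in its exact-category argument. Here the delicate point migrates to checking that an exact functor valued in the exact category $[\Ar[k_{n+1}],\cE_R]$ unwinds, under the adjunction, to precisely the multivariable exactness conditions above; this is the same bookkeeping about (partial) diagonals, monomorphism- and epimorphism-directions, and bicartesian squares, now organized one variable at a time.
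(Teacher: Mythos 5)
Your proposal is correct, and it fills in precisely the routine check that the paper leaves implicit: the paper states this proposition without proof, but its definition of the face and degeneracy maps (skipping/doubling rows and columns in one factor at a time) is exactly precomposition along the $n$-fold cosimplicial category $\Ar[k_1]\times\cdots\times\Ar[k_n]$, so your main route --- reducing the simplicial identities to cosimplicial ones and isolating the well-definedness check that $\Ar[\phi]$ preserves the exactness conditions, including the degenerate collapses to identities and to squares with two identity sides --- is the intended argument. Your alternative inductive route via $\widetilde S_\bullet$ and the exponential adjunction is likewise consistent with the paper, which records exactly this decomposition for $n=2$ in the remark on iteration immediately following the construction.
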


One can understand $S^{(n)}_{\bullet,\dots,\bullet}$ 
as an analog $n$-fold iteration of $S_\bullet$, which justifies the name ``iterated'' $S_{\bullet}$-construction (reconciling with the original viewpoint from \cite[\textsection 1.3]{waldhausen},
\cite[\textsection IV.8.5.5]{WeibelKBook} and \cite[Corollary 3.18]{Penney} for $n=2$). Let's explain this more in detail for the case $n=2$.

\begin{rmk}
\label{iteration}
If $\Ob\colon\cE x\cat\to\gpd$ denotes the core functor, there is a commutative diagram of categories:
\[
\begin{tikzcd}
\cE x\cat\arrow[r,"\widetilde{S}_\bullet"]\arrow[drr,"S^{(2)}_{\bullet,\bullet}" swap]&\cE x\cat^{\Delta^{\op}}\arrow[r,"(\widetilde{S}_\bullet)_*"]&(\cE x\cat^{\Delta^{\op}})^{\Delta^{\op}}\arrow[d,"(((\Ob)_*)_*"]\\
&&((\gpd^{\Delta^{\op}})^{\Delta^{\op}}
\end{tikzcd}
\]
Indeed, given an exact category $\cE$ and $a,b\geq0$, there are natural bijections for $a,b\geq0$:
\[
\begin{array}{lll}
\Ob ((\widetilde{S}_a)_*(\widetilde{S}_b(\cE)))& \cong &\Ob ((\widetilde{S}_a)_*([\Ar[b],\cE]))\\
     &\cong&\Ob([\Ar[a],[\Ar[b],\cE]])\\
     &\cong&\Hom^{\mathrm{ex}}(\Ar[a],[\Ar[b],\cE])\\
     &\cong&\Hom^{\mathrm{ex}}(\Ar[a]\times \Ar[b],\cE)= S_{a,b}^{(2)}(\cE).
\end{array}\]
\end{rmk}

Denote by $\mathrm{Real}^{(n)}\colon\gpd^{\Delta^{\op}\times\dots\times \Delta^{\op}}\to\cS^{\Delta^{\op}\times\dots\times \Delta^{\op}}\to\cS$ the realization of $n$-fold simplicial groupoids through $n$-fold simplicial spaces.
It is a special case of \cite[\textsection 1.3]{waldhausen} (cf.~\cite[\textsection IV.8.5.5]{WeibelKBook}) that the $K$-theory space $K(\cE_R)$ can be delooped infinitely many times as follows:

\begin{thm}
\label{delooping}
For $n\geq1$, there is an equivalence of spaces
\[
K(\cE_R)\simeq\Omega^{n} \mathrm{Real}^{(n)}( S^{(n)}_{\bullet,\dots,\bullet}(\cE_R)).
\]
\end{thm}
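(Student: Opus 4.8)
The plan is to exhibit the spaces $E_n\coloneqq\mathrm{Real}^{(n)}(S^{(n)}_{\bullet,\dots,\bullet}(\cE_R))$, for $n\geq1$, as the consecutive stages of an $\Omega$-spectrum. Since $K(\cE_R)=\Omega E_1$ holds by the definition of the $K$-theory space, the statement reduces to producing, for each $n\geq1$, a natural equivalence
\[
E_n\xrightarrow{\simeq}\Omega E_{n+1},
\]
after which $K(\cE_R)=\Omega E_1\simeq\Omega^2 E_2\simeq\dots\simeq\Omega^n E_n$ follows by iterating. The base case is just unwinding the definition, so I would concentrate entirely on this delooping equivalence.

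To build it, I would use the iterative description from \cref{iteration}: realizing every simplicial direction except the first presents $E_{n+1}$ as the realization $\mathrm{Real}(F_\bullet)$ of the simplicial space
\[
[k]\mapsto F_k\coloneqq\mathrm{Real}^{(n)}(S^{(n)}_{\bullet,\dots,\bullet}(\widetilde{S}_k(\cE_R)))=E_n(\widetilde{S}_k(\cE_R)),
\]
where $\widetilde{S}_\bullet=[\Ar[\bullet],-]$ is the lifted construction. Two features of $F_\bullet$ are decisive. First, it is \emph{reduced}: as $\widetilde{S}_0(\cE_R)$ is the trivial exact category of zero objects one has $F_0\simeq\ast$, while $\widetilde{S}_1(\cE_R)\simeq\cE_R$ gives $F_1\simeq E_n$. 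Second, $F_\bullet$ is a \emph{Segal} space, meaning the maps $F_k\to F_1^{\times k}$ are equivalences; this is precisely Waldhausen's additivity theorem (applied in the first simplicial variable, and inductively in the remaining ones), which yields $E_n(\widetilde{S}_k(\cE_R))\simeq E_n(\cE_R)^{\times k}$ even though the simplicial object $[k]\mapsto S_k$ is itself only $2$-Segal (cf.~\cref{2SegalityThm}) rather than Segal.

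Finally, I would note that $E_n$ is connected for every $n\geq1$, because $S^{(n)}_\bullet$ has a contractible space of $0$-simplices and hence realizes to a connected space; the reduced Segal space $F_\bullet$ is then automatically group-like, and the recognition principle for group-like Segal spaces gives $\Omega\,\mathrm{Real}(F_\bullet)\simeq F_1$, i.e.~$\Omega E_{n+1}\simeq E_n$, as desired. I expect the main obstacle to be the Segal claim of the previous paragraph: additivity is the genuinely deep input, and it is exactly what promotes the merely $2$-Segal (unital) simplicial object to a Segal one after applying $S_\bullet$ once more and realizing. A related subtlety worth tracking is connectivity: the comparison of $\core\cE_R$ with its group completion is an honest equivalence only once $n\geq1$, which is why the delooping is available from the first stage onward while the corresponding map out of $E_0=\mathrm{Real}(\core\cE_R)$ would only be a group completion.
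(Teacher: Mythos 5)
Your outline is precisely the standard additivity-based delooping argument, which is also the proof of the sources this paper defers to for the statement (the paper offers no proof of its own, citing \cite[\textsection 1.3]{waldhausen} and \cite[\textsection IV.8.5.5]{WeibelKBook}): one identifies $E_{n+1}$ with the realization of the reduced simplicial space $[k]\mapsto E_n([\Ar[k],\cE_R])$ via \cref{iteration}, invokes the additivity theorem to see that this simplicial space is Segal, and uses connectivity of $E_n$ for $n\geq1$ to get group-likeness and hence $E_n\simeq\Omega E_{n+1}$. The outline is correct; the only step worth flagging more explicitly is that additivity is applied levelwise in the remaining $n$ simplicial directions and then pushed through $\mathrm{Real}^{(n)}$, which uses that these realizations send levelwise equivalences to equivalences.
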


\subsection{Generalized iterated $S_\bullet$-construction}

In this last section we provide a more general version of the iterated construction that would work for any $\Sigma$-space. This might perhaps allow for an enhancement of Waldhausen's proof of \cref{delooping}, delooping the $S_\bullet$-construction in more general situations.

Let $p^{(n)}=(p,\dots,p)\colon\Sigma\to\Delta\times\dots\times\Delta$ be given by $\sigma\mapsto(p(\sigma),\dots,p(\sigma))$. There is an adjunction
\[\cP^{(n)}=p^{(n),*}\colon\cS^{\Delta^{\op}\times\dots\times\Delta^{\op}}\leftrightarrows\cS^{\Sigma^{\op}}\colon p^{(n)}_*=S_{\bullet,\dots,\bullet}^{(n)}\]
between the category $\cS^{\Delta^{\op}\times\dots\times\Delta^{\op}}$ of $n$-fold simplicial spaces and the category $\cS^{\Sigma^{\op}}$ of $\Sigma$-spaces.

We can understand the functor $\cP^{(n)}$ on the representable functor $\Delta[ k_1,\dots k_n]$:

\begin{prop}
For $n\geq1$ and $ k_1,\dots, k_n\geq0$,
there is an isomorphism of $\Sigma$-spaces:
\[
\cP^{(n)}\Delta[ k_1,\dots k_n]\cong \cP\Delta[ k_1]\times\dots \times\cP\Delta[ k_n]\cong N^{\mathrm{ex}}\Ar[k_1]\times\dots N^{\mathrm{ex}}\Ar[k_n].
\]
\end{prop}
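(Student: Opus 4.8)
The plan is to prove the two isomorphisms separately, obtaining the first one formally and the second one by an explicit comparison of $\Sigma$-spaces. For the first isomorphism, I would exploit that $p^{(n)}$ factors through the diagonal. Writing $\delta\colon\Delta\to\Delta\times\dots\times\Delta$ for the diagonal functor $[m]\mapsto([m],\dots,[m])$, the very definition $\sigma\mapsto(p\sigma,\dots,p\sigma)$ gives $p^{(n)}=\delta\circ p$, whence $\cP^{(n)}=p^*\circ\delta^*$. Now $\delta^*$ carries the external product $\Delta[k_1,\dots,k_n]=\Delta[k_1]\boxtimes\dots\boxtimes\Delta[k_n]$ to the internal product $\Delta[k_1]\times\dots\times\Delta[k_n]$ of simplicial spaces, since $(\delta^*X)_m=X_{m,\dots,m}$ and products of simplicial spaces are computed levelwise. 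Finally, $\cP=p^*$ is a restriction functor, so it preserves every limit computed objectwise, in particular finite products. Combining these observations yields
\[
\cP^{(n)}\Delta[k_1,\dots,k_n]=p^*\delta^*\big(\Delta[k_1]\boxtimes\dots\boxtimes\Delta[k_n]\big)\cong p^*\big(\Delta[k_1]\times\dots\times\Delta[k_n]\big)\cong\cP\Delta[k_1]\times\dots\times\cP\Delta[k_n],
\]
which is the first claimed isomorphism.

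For the second isomorphism, since products of $\Sigma$-spaces are also computed objectwise, it suffices to establish the case $n=1$, namely $\cP\Delta[k]\cong N^{\mathrm{ex}}\Ar[k]$, and then take the $n$-fold product. Here I read $N^{\mathrm{ex}}\Ar[k]$ through the defining formulas of the exact nerve, regarding $\Ar[k]$ with its evident structure: the diagonal objects $(i,i)$ as zero objects, the morphisms increasing the second coordinate as monomorphisms, the morphisms increasing the first coordinate as epimorphisms, and all commutative squares as bicartesian (which is automatic, as $\Ar[k]$ is a poset; note that both $\Sigma$-spaces in play are then levelwise discrete). I would compute both sides objectwise. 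On the one hand $(\cP\Delta[k])(\sigma)=\Delta[k](p\sigma)=\Hom_\Delta(p\sigma,[k])$, so $(\cP\Delta[k])_{a,b}=\Hom_\Delta([a+1+b],[k])$ and $(\cP\Delta[k])_{-1}=\Hom_\Delta([0],[k])=\{0,\dots,k\}$. On the other hand, unwinding the exactness conditions shows that an exact functor $[a]\times[b]\to\Ar[k]$ is forced to have the form $(s,t)\mapsto(\alpha(s),\beta(t))$ for order-preserving $\alpha\colon[a]\to[k]$ and $\beta\colon[b]\to[k]$ subject to the single inequality $\alpha(a)\le\beta(0)$, while the zero objects form the set $\{(i,i)\}\cong\{0,\dots,k\}$.

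The bijection is then supplied by the ordinal-sum structure defining $p$: writing $[a+1+b]=[a]\star[b]$, a monotone map $\phi\colon[a+1+b]\to[k]$ restricts to $\alpha=\phi|_{[a]}$ and $\beta=\phi|_{[b]}$, and monotonicity of $\phi$ is exactly equivalent to monotonicity of $\alpha$, of $\beta$, together with the bridging inequality $\alpha(a)=\phi(a)\le\phi(a+1)=\beta(0)$; this is manifestly inverse to the assignment $(\alpha,\beta)\mapsto\phi$, and at the augmentation level the two copies of $\{0,\dots,k\}$ match directly. The main thing left to verify, and the real content of the argument, is that these level-wise bijections are natural in $\sigma\in\Sigma^{\op}$, i.e.\ that they commute with all structure maps of the $\Sigma$-space. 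I expect this to be the bulk of the work, though a routine one: every morphism of $\Sigma$ is either a morphism of $\Delta\times\Delta$ or the collapse to the terminal object $[-1]$, and under $p$ these act on $[a+1+b]$ precisely through the ordinal sum of their two components (respectively by collapsing to $[0]$). Since both the structure maps of $\cP\Delta[k]$ (precomposition with $p$ into the contravariant hom $\Hom_\Delta(-,[k])$) and those of $N^{\mathrm{ex}}\Ar[k]$ (restriction of exact functors along maps of $[a]\times[b]$, and the augmentation) are governed by this same join combinatorics, the relevant squares commute; the verification amounts to tracking the splitting $[a]\star[b]$ through each generating face, degeneracy, and augmentation map.
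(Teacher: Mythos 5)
Your argument is correct. For the first isomorphism it is essentially the paper's proof in different packaging: the paper computes $(\cP^{(n)}\Delta[k_1,\dots,k_n])_\alpha$ objectwise via the Yoneda lemma and the adjunction $p^{(n)}_!\dashv p^{(n),*}$, reducing to the fact that the multisimplicial representable evaluated at the diagonal $(p(\alpha),\dots,p(\alpha))$ splits as a product of the $\Delta[k_i]_{p(\alpha)}$; your factorization $p^{(n)}=\delta\circ p$, together with the observations that $\delta^*$ converts external products into levelwise products and that the restriction functor $p^*$ preserves objectwise limits, encodes exactly the same computation. Where you genuinely go beyond the paper is the second isomorphism $\cP\Delta[k]\cong N^{\mathrm{ex}}\Ar[k]$: the paper's proof addresses only the first isomorphism and leaves this identification implicit (it is essentially the content of the earlier remark comparing $S_\bullet\circ N^{\mathrm{ex}}$ with $N_*\circ S_\bullet$, going back to \cite{BOORS4}), whereas you unwind it explicitly, identifying both sides at level $(a,b)$ with $\Hom_\Delta([a+1+b],[k])$, respectively with pairs $(\alpha,\beta)$ subject to $\alpha(a)\le\beta(0)$, via the join decomposition $[a+1+b]=[a]\star[b]$, and matching the augmentations $\{0,\dots,k\}$ on both sides. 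That unwinding is correct -- the key point, which you state, is that exactness forces a functor $[a]\times[b]\to\Ar[k]$ to split coordinatewise -- and your closing assessment that the remaining work is the naturality check over the generating morphisms of $\Sigma$ is accurate; spelling that check out for the cofaces, codegeneracies, and the augmentation map would complete the argument. One small caveat worth flagging explicitly: $\Ar[k]$ is a poset rather than an exact category, so $N^{\mathrm{ex}}\Ar[k]$ must be read, as you do, through the evident squares/proto-exact structure on $\Ar[k]$ rather than literally through the definition for exact categories.
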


\begin{proof}
Let's do the case $n=2$, the general case working similarly.
For $\alpha$ in $\Sigma$, and $k_1,k_2\geq0$, there are natural isomorphisms of spaces
\[
\begin{array}{llll}
  (\cP^{(2)}\Delta[ k_1, k_2])_\alpha  &\cong&\Map(\Sigma[\alpha],p^{(2),*}\Delta[ k_1, k_2])  \\
    &\cong & \Map(p^{(2)}_!\Sigma[\alpha],\Delta[ k_1, k_2])\\
&\cong&\Map(\Delta[p(\alpha),p(\alpha)],\Delta[ k_1, k_2])\\
&\cong&\Map(\Delta[p(\alpha)],\Delta[ k_1])\times\Map(\Delta[p(\alpha)],\Delta[ k_2])\\
&\cong&\Map(\Sigma[\alpha],\cP\Delta[ k_1])\times\Map(\Sigma[\alpha],\cP\Delta[ k_2])\\
&\cong&(\cP\Delta[ k_1])_\alpha\times(\cP\Delta[ k_2])_\alpha,
\end{array}
\]
as desired.
\end{proof}

We can then describe the functor $S_{\bullet,\dots,\bullet}^{(n)}\colon \cS^{\Sigma^{\op}}\to\cS^{\Delta^{\op}\times\dots\times\Delta^{\op}}$ explicitly:

\begin{const}
 Given a $\Sigma$-space $X$, $n\geq0$ and $ k_1,\dots, k_n\geq0$, we set
 \[S^{(n)}_{ k_1,\dots, k_n}(X)=\Map(\cP\Delta[k_1]\times\dots\times\cP\Delta[k_n],X)\]
 to be the space of maps of $\Sigma$-spaces $\cP\Delta[ k_1]\times\dots \times\cP\Delta[ k_n]\to X$.
\end{const}

So in total we get a multisimplicial space:

\begin{prop}
Given a $\Sigma$-space $X$, the assignment $[ k_1,\dots, k_n]\mapsto S^{(n)}_{ k_1,\dots, k_n}(X) $ defines an $n$-fold simplicial space $S^{(n)}_{\bullet,\dots,\bullet}(X)$.
\end{prop}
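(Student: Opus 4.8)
The plan is to deduce this formally from the adjunction $\cP^{(n)}=p^{(n),*}\dashv p^{(n)}_*=S^{(n)}_{\bullet,\dots,\bullet}$ already recorded above. Since $p^{(n)}_*$ is a right adjoint, it is in particular a functor $\cS^{\Sigma^{\op}}\to\cS^{\Delta^{\op}\times\dots\times\Delta^{\op}}$; hence for every $\Sigma$-space $X$ the object $p^{(n)}_*X$ is, \emph{by construction}, an $n$-fold simplicial space. It therefore only remains to identify its $(k_1,\dots,k_n)$-level with the space appearing in the preceding Construction, and to check that this identification is natural in the multi-index $(k_1,\dots,k_n)$ so that the structure maps and simplicial identities come along for free.

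For the identification, I would invoke the enriched Yoneda lemma in the form $Y_{k_1,\dots,k_n}\cong\Map(\Delta[k_1,\dots,k_n],Y)$, valid for any $n$-fold simplicial space $Y$, where $\Delta[k_1,\dots,k_n]$ denotes the represented $n$-fold simplicial space. Applying this to $Y=p^{(n)}_*X$ and then the defining adjunction yields
\[
(p^{(n)}_*X)_{k_1,\dots,k_n}\cong\Map(\Delta[k_1,\dots,k_n],p^{(n)}_*X)\cong\Map(\cP^{(n)}\Delta[k_1,\dots,k_n],X).
\]
The previous Proposition then rewrites $\cP^{(n)}\Delta[k_1,\dots,k_n]\cong\cP\Delta[k_1]\times\dots\times\cP\Delta[k_n]$, producing exactly $S^{(n)}_{k_1,\dots,k_n}(X)=\Map(\cP\Delta[k_1]\times\dots\times\cP\Delta[k_n],X)$, as in the Construction.

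For the structure maps, the face and degeneracy maps of $p^{(n)}_*X$ in the $i$-th variable are those induced by the coface and codegeneracy maps of $\Delta[\bullet]$ in the $i$-th slot under the above isomorphism; equivalently, they are induced by precomposition in $\Map(-,X)$ along the cosimplicial structure of $[k]\mapsto\cP\Delta[k]$, which exists because $\cP=p^*$ is a functor and hence carries the cosimplicial object $\Delta[\bullet]$ to a cosimplicial $\Sigma$-space. Since each step in the displayed chain is a natural isomorphism in $(k_1,\dots,k_n)$, the simplicial identities within each variable are inherited from those of $\Delta[\bullet]$, and the commutation of structure maps in distinct variables is inherited from the corresponding commutations for $\Delta[k_1,\dots,k_n]$. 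I do not expect a genuine obstacle: the argument is entirely formal, resting only on the adjunction and the previous Proposition. The one point that warrants care is verifying \emph{naturality} of the displayed isomorphisms in the multi-index, since it is precisely this naturality that upgrades the family of level spaces to an honest $n$-fold simplicial diagram rather than a mere pointwise collection.
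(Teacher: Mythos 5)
Your argument is correct and is essentially the paper's own (implicit) justification: the paper defines $S^{(n)}_{\bullet,\dots,\bullet}$ as the right adjoint $p^{(n)}_*$, so functoriality in the multi-index is automatic, and the identification of the levels with $\Map(\cP\Delta[k_1]\times\dots\times\cP\Delta[k_n],X)$ is exactly the Yoneda-plus-adjunction computation you spell out, using the preceding proposition on $\cP^{(n)}\Delta[k_1,\dots,k_n]$. The paper leaves this verification to the reader; you have simply made it explicit, with appropriate attention to naturality.
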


This construction recovers the one for exact categories through the exact nerve $N^{\mathrm{ex}}\colon\cE x\cat\to\cS^{\Sigma^{\op}}$ from \cite[Definition~2.2]{BOORS4} as follows:

\begin{rmk}
There is a commutative diagram of categories
\[
\begin{tikzcd}
\cE x\cat\arrow[d,"N^{\mathrm{ex}}" swap]\arrow[r,"S^{(2)}_{\bullet,\bullet}"]&\gpd^{\Delta^{\op}\times\Delta^{\op}}\arrow[d,"N_*"]\\
\cS^{\Sigma^{\op}}\arrow[r,"S^{(2)}_{\bullet,\bullet}" swap]&\cS^{\Delta^{\op}\times\Delta^{\op}}
\end{tikzcd}
\]
Indeed, for $ k_1,k_2\geq0$ and $\cE$ an exact category, there is a natural bijection
\[
\begin{array}{llll}
    S^{(2)}_{ k_1,k_2}(N^{\mathrm{ex}}\cE) &\cong  &\Map(\cP\Delta[ k_1]\times\cP\Delta[ k_2],N^{\mathrm{ex}}\cE)\\
&\cong  &N\Hom^{\mathrm{ex}}(\Ar[ k_1]\times\Ar[ k_2],\cE)\\
&\cong& NS^{(2)}_{ k_1, k_2}(\cE)\cong(N_*S^{(2)})_{ k_1, k_2}(\cE).&\\
\end{array}
\]
\end{rmk}

\begin{rmk}
It is possible to recognize the functor $S^{(n)}_{\bullet,\dots,\bullet}$ as an appropriate iteration of the $S_\bullet$-construction in the context of $\Sigma$-spaces, by replacing the category $\cE x\cat$ with the category $\cS^{\Delta^{\op}}$ in \cref{iteration}.
\end{rmk}

One \emph{could} then hope for a delooping in larger generality, so that $K(X)$ would come from an $\Omega$-spectrum:

\begin{question}
For $n\geq1$, under which conditions on a $\Sigma$-space $X$ does one have equivalence of spaces
\[
K(X) \simeq\Omega^{n} \mathrm{Real}^{(n)} (S_{\bullet,\dots,\bullet}^{(n)}(X))?
\]
\end{question}

This type of statement was proven in \cite[\textsection 1.3]{waldhausen} in the context of Waldhausen categories, and in \cite[\textsection7]{SarazolaShapiro} in the context of ECGW categories, and it might hold in larger generality.

\subsection{Multi-2-Segality properties}

One could explore the $2$-Segality property of the iterated Waldhausen construction. Not much has been done in this direction, and we just include some speculative considerations. The overall expectation is:
\begin{center}
 \begin{tikzpicture}
 \def\vsp{1.5cm}
 \def\hspo{7cm}
  \def\hspt{4.4cm}
   \draw (-1+\hspo,-2*\vsp) node[category1] (Multi2Segal){$n$-fold $2$-Segal spaces};
   \draw (-1+\hspo, -3*\vsp) node[category1] (Multi2SegPlus){$n$-fold lower $2$-Segal spaces};
    \draw (-1+\hspo, -4*\vsp) node[category1] (MultiSimpSp){$n$-fold simplicial spaces};
   \draw (-1,-2*\vsp) node[category1] (sadss){stable augmented double Segal spaces};
    \draw (-1,-3*\vsp) node[category1] (hsadss){semi-stable augmented double Segal spaces};
      \draw (-1,-4*\vsp) node[category1] (SigmaSp){$\Sigma$-spaces};
   %
\draw[-stealth, dashed] (sadss)--node[above]{$S^{(n)}_{\bullet,\dots,\bullet}$} node[below](label1){$?$} (Multi2Segal);
\draw[-stealth, dashed] (hsadss)--node[above]{$S^{(n)}_{\bullet,\dots,\bullet}$} node[below](label2){$?$} (Multi2SegPlus);
\draw[-stealth] (SigmaSp)--node[above]{$S^{(n)}_{\bullet,\dots,\bullet}$}(MultiSimpSp);
\draw[right hook-stealth] (sadss)--(hsadss);
\draw[right hook-stealth] (hsadss)--(SigmaSp);
\draw[right hook-stealth] (Multi2Segal)--(Multi2SegPlus);
 \draw[right hook-stealth] (Multi2SegPlus)--(MultiSimpSp);
 \end{tikzpicture}
\end{center}

\subsubsection{The multi-$2$-Segal case}

In the pointed stable case we should have (case $n=2$ and $\cE$ being a protoexact category is 
\cite[Corollary~3.17]{Penney}):

\begin{expectation}
If $\cE$ is any of the structures discussed in \cref{stable}, the iterated $S_\bullet$-construction $S^{(n)}_{\bullet,\dots,\bullet}(\cE)$
is a $2$-Segal object in each simplicial variable.
\end{expectation}

For instance, assuming that $X^{\cP\Delta[\ell]}$ is a stable augmented double Segal space whenever $X$ is, \cref{2SegalityThm} would imply that $S^{(2)}_{\bullet,\ell}(X)= S_\bullet(X^{\cP\Delta[\ell]})$ is a $2$-Segal space.

One could then study whether all multi-$2$-Segal spaces arise as an instance of an $S_\bullet$-construction:

\begin{question}
Does the $S_\bullet$-construction define an equivalence between multi-$2$-Segal spaces and stable
augmented double Segal spaces?
If not, is it injective (up to homotopy)? And what is its (homotopy essential) image?
\end{question}

This question is possibly treatable by adjusting the argument from \cite{BOORS3} (or \cite{BOORS1} for the discrete case).

\subsubsection{The multi-lower-$2$-Segal case}

    If $\cW$ is any structure from \cref{NonStable}, the $S_\bullet$-construction is not expected to define a multi-$2$-Segal space, as flaws similar to  those highlighted in \cref{Snot2Segal} would remain. However:

\begin{expectation}
If $\cW$ is any of the structures discussed in \cref{NonStable} the iterated $S_\bullet$-construction $S^{(n)}_{\bullet,\dots,\bullet}(\cW)$
    is a lower $2$-Segal object in each simplicial variable.
\end{expectation}

For instance, assuming that $X^{\cP\Delta[\ell]}$
is a semi-stable augmented double Segal space whenever $X$ is, \cref{Lower2Segal} would imply that $S^{(2)}_{\bullet,\ell}(X)= S_\bullet(X^{\cP\Delta[\ell]})$ is a lower $2$-Segal space.

One could study whether all multi-lower-$2$-Segal spaces arise as $S_\bullet$-construction:

\begin{question}
Does the $S_\bullet$-construction define an equivalence between multi-lower-$2$-Segal spaces and semi-stable augmented
double Segal spaces? If not, is it injective (up to homotopy)? And what is its (homotopy essential) image?
\end{question}

Again, this question is possibly treatable by adjusting the argument from \cite{BOORS3} (or \cite{BOORS1} for the discrete case).

\bibliographystyle{amsalpha}
\bibliography{ref}

\newcommand{\etalchar}[1]{$^{#1}$}
\providecommand{\bysame}{\leavevmode\hbox to3em{\hrulefill}\thinspace}
\providecommand{\MR}{\relax\ifhmode\unskip\space\fi MR }
\providecommand{\MRhref}[2]{%
  \href{http://www.ams.org/mathscinet-getitem?mr=#1}{#2}
}
\providecommand{\href}[2]{#2}
\begin{thebibliography}{BOO{\etalchar{+}}21b}

\bibitem[Bar15]{BarwickExact}
Clark Barwick, \emph{On exact {$\infty$}-categories and the theorem of the
  heart}, Compos. Math. \textbf{151} (2015), no.~11, 2160--2186. \MR{3427577}

\bibitem[Bar16]{BarwickKtheory}
\bysame, \emph{On the algebraic {$K$}-theory of higher categories}, J. Topol.
  \textbf{9} (2016), no.~1, 245--347. \MR{3465850}

\bibitem[BLLM22]{Oberwolfach2024}
Michael~A Batanin, Andrey Lazarev, Muriel Livernet, and Martin Markl,
  \emph{Homotopical algebra and higher structures}, Oberwolfach Reports
  \textbf{18} (2022), no.~3, 2497--2560.

\bibitem[BOO{\etalchar{+}}18]{BOORS1}
Julia~E. Bergner, Ang\'{e}lica~M. Osorno, Viktoriya Ozornova, Martina Rovelli,
  and Claudia~I. Scheimbauer, \emph{2-{S}egal sets and the {W}aldhausen
  construction}, Topology Appl. \textbf{235} (2018), 445--484. \MR{3760213}

\bibitem[BOO{\etalchar{+}}20]{BOORS2}
\bysame, \emph{The edgewise subdivision criterion for 2-{S}egal objects}, Proc.
  Amer. Math. Soc. \textbf{148} (2020), no.~1, 71--82. \MR{4042831}

\bibitem[BOO{\etalchar{+}}21a]{BOORS3}
\bysame, \emph{2-{S}egal objects and the {W}aldhausen construction}, Algebr.
  Geom. Topol. \textbf{21} (2021), no.~3, 1267--1326. \MR{4299667}

\bibitem[BOO{\etalchar{+}}21b]{BOORS4}
\bysame, \emph{Comparison of {W}aldhausen constructions}, Ann. K-Theory
  \textbf{6} (2021), no.~1, 97--136. \MR{4283095}

\bibitem[Car24]{CarawanWaldhausen}
Tanner~Nathan Carawan, \emph{2-{S}egal maps associated to a category with
  cofibrations}, \href{https://arxiv.org/abs/2405.11561v1}{arXiv:2405.11561v1}
  (2024).

\bibitem[CS24]{CalleSarazola}
Maxine~E. Calle and Maru Sarazola, \emph{Squares {K}-theory and 2-{S}egal
  spaces}, \href{https://arxiv.org/abs/2409.16428v1}{arXiv:2409.16428v1}
  (2024).

\bibitem[CZ22]{CZdevissage}
Jonathan~A. Campbell and Inna Zakharevich, \emph{D\'{e}vissage and localization
  for the {G}rothendieck spectrum of varieties}, Adv. Math. \textbf{411}
  (2022), Paper No. 108710, 80. \MR{4512395}

\bibitem[DK19]{DKbook}
Tobias Dyckerhoff and Mikhail Kapranov, \emph{Higher {S}egal spaces}, Lecture
  Notes in Mathematics, vol. 2244, Springer, Cham, 2019. \MR{3970975}

\bibitem[GCKT18]{GCKT1}
Imma G\'alvez-Carrillo, Joachim Kock, and Andrew Tonks, \emph{Decomposition
  spaces, incidence algebras and {M}\"obius inversion {I}: {B}asic theory},
  Adv. Math. \textbf{331} (2018), 952--1015. \MR{3804694}

\bibitem[Gra76]{Grayson}
Daniel Grayson, \emph{Higher algebraic {$K$}-theory. {II} (after {D}aniel
  {Q}uillen)}, Algebraic {$K$}-theory ({P}roc. {C}onf., {N}orthwestern {U}niv.,
  {E}vanston, {I}ll., 1976), Lecture Notes in Math., vol. Vol. 551, Springer,
  Berlin-New York, 1976, pp.~217--240. \MR{574096}

\bibitem[Hac24]{HackneyChapter}
Philip Hackney, \emph{The decomposition space perspective},
  \href{https://arxiv.org/abs/2409.19061v1}{arXiv:2409.19061v1} (2024), this
  volume.

\bibitem[Hat02]{HatcherAT}
Allen Hatcher, \emph{Algebraic topology}, Cambridge University Press,
  Cambridge, 2002. \MR{1867354 (2002k:55001)}

\bibitem[{Lur}06]{LurieStable}
Jacob {Lurie}, \emph{Stable infinity categories},
  \href{https://arxiv.org/abs/math/0608228v5}{arXiv:0608228v5} (2006).

\bibitem[Lur18]{LurieHA}
Jacob Lurie, \emph{Higher algebra}, preprint available at
  \url{http://www.math.harvard.edu/~lurie/papers/HA.pdf}, retrieved in December
  2024, 2018.

\bibitem[Nee90]{NeemanExact}
Amnon Neeman, \emph{The derived category of an exact category}, J. Algebra
  \textbf{135} (1990), no.~2, 388--394. \MR{1080854}

\bibitem[Pen17]{Penney}
Mark~D Penney, \emph{{The universal Hall bialgebra of a double 2-Segal space}},
  \href{https://arxiv.org/abs/1711.10194v1}{arXiv:1711.10194v1} (2017).

\bibitem[Pog17]{Poguntke}
Thomas Poguntke, \emph{{Higher Segal structures in algebraic $K$-theory}},
  \href{https://arxiv.org/abs/1709.06510v1}{arXiv:1709.06510v1} (2017).

\bibitem[Qui72]{QuillenCohomologyK}
Daniel Quillen, \emph{On the cohomology and {K}-theory of the general linear
  groups over a finite field}, Annals of Mathematics \textbf{96} (1972), no.~3,
  552--586.

\bibitem[Qui73]{QuillenK}
Daniel Quillen, \emph{Higher algebraic {$K$}-theory. {I}}, Algebraic
  {$K$}-theory, {I}: {H}igher {$K$}-theories ({P}roc. {C}onf., {B}attelle
  {M}emorial {I}nst., {S}eattle, {W}ash., 1972), Springer, Berlin, 1973,
  pp.~85--147. Lecture Notes in Math., Vol. 341. \MR{0338129 (49 \#2895)}

\bibitem[Rov24]{RovelliChapter}
Martina Rovelli, \emph{The {$S_\bullet$}-construction as an equivalence between
  $2$-{S}egal spaces and stable augmented double {S}egal spaces}, 2024, this
  volume.

\bibitem[Seg74]{SegalCohomology}
Graeme Segal, \emph{Categories and cohomology theories}, Topology \textbf{13}
  (1974), 293--312. \MR{353298}

\bibitem[SS21]{SarazolaShapiro}
Maru Sarazola and Brandon Shapiro, \emph{A {G}illet--{W}aldhausen theorem for
  chain complexes of sets},
  \href{https://arxiv.org/abs/2107.07701v2}{arXiv:2107.07701v2} (2021).

\bibitem[Ste24]{SternChapter}
Walker~H. Stern, \emph{The 2-{S}egal space perspective}, 2024, this volume.

\bibitem[TT90]{ThomasonTrobaugh}
R.~W. Thomason and Thomas Trobaugh, \emph{Higher algebraic {$K$}-theory of
  schemes and of derived categories}, The {G}rothendieck {F}estschrift, {V}ol.\
  {III}, Progr. Math., vol.~88, Birkh\"auser Boston, Boston, MA, 1990,
  pp.~247--435. \MR{1106918}

\bibitem[Wal78]{WaldhausenTopI}
Friedhelm Waldhausen, \emph{Algebraic {$K$}-theory of topological spaces. {I}},
  Algebraic and geometric topology ({P}roc. {S}ympos. {P}ure {M}ath.,
  {S}tanford {U}niv., {S}tanford, {C}alif., 1976), {P}art 1, Proc. Sympos. Pure
  Math., vol. XXXII, Amer. Math. Soc., Providence, RI, 1978, pp.~35--60.
  \MR{520492}

\bibitem[Wal85]{waldhausen}
\bysame, \emph{Algebraic {$K$}-theory of spaces}, Algebraic and geometric
  topology ({N}ew {B}runswick, {N}.{J}., 1983), Lecture Notes in Math., vol.
  1126, Springer, Berlin, 1985, pp.~318--419. \MR{802796}

\bibitem[Wei05]{WeibelIntegers}
Charles Weibel, \emph{Algebraic {$K$}-theory of rings of integers in local and
  global fields}, Handbook of {$K$}-theory. {V}ol. 1, 2, Springer, Berlin,
  2005, pp.~139--190. \MR{2181823}

\bibitem[Wei13]{WeibelKBook}
Charles~A. Weibel, \emph{The {$K$}-book}, Graduate Studies in Mathematics, vol.
  145, American Mathematical Society, Providence, RI, 2013, An introduction to
  algebraic $K$-theory. \MR{3076731}

\end{thebibliography}

\end{document}